\newtheorem{thm}{Theorem}
\newtheorem{lem}{Lemma}
\newtheorem{cor}{Corollary}
\newtheorem{rem}{Remark}
\newtheorem{defn}{Definition}
\DeclareMathOperator*{\argmax}{argmax}
\DeclareMathOperator*{\card}{card}
\DeclareMathOperator*{\diag}{diag}
\DeclareMathOperator*{\rank}{rank}
\begin{document}

\title{Deviation Maximization for Rank-Revealing QR Factorizations}

\author{Monica Dessole         \and
        Fabio Marcuzzi 
}
\date{June, 6th 2021}

\maketitle
{\centering Department of Mathematics ``Tullio Levi Civita''\\University of Padova\\Via Trieste 63, 35131 Padova, Italy\\ \footnotesize{e-mail: \verb#mdessole@math.unipd.it,marcuzzi@math.unipd.it#}\\}

\begin{abstract}
In this paper we introduce a new column selection strategy, named here ``Deviation Maximization", and apply it to compute rank-revealing QR factorizations as an alternative to the well known block version of the QR factorization with the column pivoting method, called QP3 and currently implemented in LAPACK's \texttt{xgeqp3} routine.
We show that the resulting algorithm, named QRDM, has similar rank-revealing properties of QP3 and better execution times. We present numerical test results on a wide data set of numerically singular matrices, which has become a reference in the recent literature.
\end{abstract}

\section{Introduction}

The rank-revealing QR (RRQR) factorization was introduced by Golub \cite{Gol65} and it is nowadays a classic topic in numerical linear algebra; for example, \citet{golub-libro} introduce Rank Revealing QR (RRQR) for least squares problems where the matrix has not full column rank: in such a case, a plain QR computation may lead to an $R$ factor in which the number of nonzeros on the diagonal does not equal the rank and the matrix $Q$ does not reveal the range nor the null space of the original matrix. Here, the SVD decomposition is the safest and most expensive solution method, while approaches based on a modified QR factorization can be seen as cheaper alternatives. Since the QR factorization is essentially unique once the column ordering is fixed, these techniques all amount to finding an appropriate column permutation. The first algorithm was proposed in \cite{BGo65} and it is referred as QR factorization with column pivoting (QRP).
It should be noticed that, if the matrix of the least squares problem has not full column rank, then there is an infinite number of solutions. We must resort to rank revealing techniques which identify a particular solution as ``special''. QR with column pivoting identify a particular \textit{basic} solution (with $r$ nonzeros, where $r$ is the rank), while biorthogonalization methods \cite{golub-libro}, identify the minimum $\ell_2$ solution. Rank-revealing decompositions can be used in a number of other applications \cite{Han99}.

The QR factorization with column pivoting works pretty well in practice, even if there are some examples in which it fails, see e.g. the Kahan matrix \cite{Kah66}. However, further improvements are possible, see e.g. \citet{Cha87} and \citet{Fos86}: the idea here is to identify and remove small singular values one by one. \citet{GEi96} introduced the Strong RRQR factorization, a stable algorithm for computing a RRQR factorization with a good approximation of the null space, which is not guaranteed by QR factorization with column pivoting. Both can be used as optional improvements to the QR factorization with column pivoting. Rank revealing QR factorizations were also treated in \cite{GKS76,CIp94,HPa92}.

Column pivoting makes it more difficult to achieve high performances in QR computation, see \cite{BHa92,BQu98,BQu98b,QSB98,Bis89}.
The state-of-the-art algorithm for computing RRQR, named QP3, is a block version \cite{QSB98} of the standard column pivoting and it is currently implemented in LAPACK \cite{lapack99}.
Other recent high-performance approaches are tournament pivoting \cite{DGG15} and randomized pivoting \cite{DGu17,XGL17,Mar15}. 
In this paper we present a column selection technique, that we call ``Deviation Maximization", and we propose it to derive QRDM, an alternative block algorithm to QP3 for computing RRQR factorizations.

The rest of this paper is organized as follows. In Section \ref{sec:DM} we motivate and present this novel column selection technique; in Section \ref{sec_RRQR_factorizations} we define the Rank-Revealing factorization, we review the QRP algorithm and then we introduce QRDM, a block algorithm for RRQR by means of Deviation Maximization furthermore, we give theoretical worst case bounds for the smallest singular value of the $R$ factor of the RRQR factorizations obtained with these two methods. In Section \ref{sec_implementation_issues} we discuss some fundamental issues regarding the implementation of QRDM; in Section \ref{sec:numerical_experiments} we compare QP3 and QRDM against a relevant database of singular matrices; finally, the paper concludes with Section \ref{sec:conclusions} and an Appendix with auxiliary results used in proofs.

\subsection{Notation} 
In what follows we denote by $\mathbb{O}$ and $\mathbb{I}$ the zero and identity matrices respectively with proper sizes (the former may even be rectangular). For any matrix $A$ of size $m \times n$, we denote by $[A]_{{I},{J}}$ the submatrix of $A$ obtained considering the entries with row and columns indices ranging in the sets ${I}$ and ${J}$, respectively. We make use of the so called ``colon notation", that is we denote by $[A]_{k:l,p:q}$ the submatrix of $A$ obtained considering the entries with row indices $k \leq i \leq l$ and column indices $p \leq j \leq q$. We use the shorthands $[A]_{{J}}$ and $[A]_{p:q}$ to indicate the submatrices $[A]_{1:m,{J}}$ and $[A]_{1:m, p:q}$ respectively, where only the column index is restricted.
We also denote the $(i,j)$-th entry as $a_{ij}$ ($a_{i,j}$) or $[A]_{ij}$ ($[A]_{i,j}$). The singular values of a matrix $A$ are denoted as
\begin{equation*}
    \sigma_{\max}(A) = \sigma_1(A) \geq \sigma_2(A) \geq \dots \geq \sigma_{\min}(A) = \sigma_{\min(m,n)}(A) \geq 0.
\end{equation*}
Given the vector norm $\Vert x \Vert_p = (|x_1|^p + \dots |x_n|^p)^{1/p}$, $p \geq 1$, we denote the family of $p$-norms as
\begin{equation*}
    \Vert A \Vert_p = \sup_{\Vert x\Vert_p = 1} \Vert Ax \Vert_p.
\end{equation*}
We denote the operator norm by $\Vert A \Vert_2 = \sigma_{\max}(A)$. When the context allows it, we drop the subscript on the $2$-norm. With a little abuse of notation, we define the max-norm of $A$ as $\Vert A \Vert_{\max} = \max_{i,j} |a_{ij}|$. Recall that the max-norm is not a matrix norm (it is not submultiplicative), and it should not be confused with the $\infty$-norm $\Vert A \Vert_{\infty} = \max_{i} \sum_j |a_{ij}|$. 

\section{Subset selection by Deviation Maximization} 
\label{sec:DM}
Consider an $m \times n$ matrix $A$ which has not full column rank, that is $\rank(A)= r < n$, and consider the problem of finding a subset of well conditioned columns of $A$. Before presenting a strategy to solve this problem, let us first introduce the notion of cosine matrix associated to a given matrix.
\begin{defn}
Let $C = (\mathbf{c}_{1}\ \dots \ \mathbf{c}_{k})$ be an $m\times k$ matrix whose columns $\mathbf{c}_{j}$ are non-null. Let $D$ be the diagonal matrix with entries $D_{ii} = \Vert \mathbf{c}_{i} \Vert\ ,\ 1\leq i \leq k$, then the \textbf{cosine matrix} associated to $C$ is defined as $\Theta = \Theta(C) = \left( C D^{-1} \right)^T C D^{-1} = D^{-1} C^T C D^{-1}$, and its entries are
\begin{equation}
    \theta_{ij} = \frac{\mathbf{c}_i^T \mathbf{c}_j}{\Vert \mathbf{c}_{i} \Vert\Vert \mathbf{c}_{j} \Vert} = \cos(\alpha_{ij}), \quad 1 \leq i,j \leq k.
    \label{eq:def_cosine_matrix}
\end{equation}
where $ \alpha_{ij} = \alpha(\mathbf{c}_i,\mathbf{c}_j)$ is the angle between the pair $\mathbf{c}_i,\mathbf{c}_j$. 
\end{defn}

It is immediate to see that the cosine matrix $\Theta$ is symmetric, it has only ones on the diagonal, and its entries range from $-1$ to $1$. 
The main idea behind Deviation Maximization is based on the following result.
\begin{lem}
Let $C = (\mathbf{c}_{1} \ \dots \ \mathbf{c}_{k})$ be an $m\times k$ matrix such that $\Vert\mathbf{c}_{1}\Vert = \max_j \Vert\mathbf{c}_{j}\Vert$, and let $\Theta$ be the associated cosine matrix. Suppose there exists $1 \geq \tau >0$ such that $\Vert \mathbf{c}_{j} \Vert \geq \tau \Vert\mathbf{c}_{1}\Vert$, for all $1 \leq j \leq k$. Moreover, suppose that $\Theta$ is a strictly diagonally dominant matrix with $\gamma= \min_i(1-\sum_{j\neq i} |\theta_{ij}|) >1-\tau^2>0$. Then $\sigma_{\min}(C) \geq \sqrt{\gamma + \tau^2-1}\ \Vert\mathbf{c}_{1}\Vert$.
\label{lem:tau_gamma}
\end{lem}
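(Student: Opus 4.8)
The plan is to reduce the claim to a lower bound on the smallest eigenvalue of the Gram matrix and then treat the off-diagonal part of the cosine matrix as a perturbation of the identity. First I would write $C = \tilde C D$, where $D = \diag(\Vert\mathbf{c}_1\Vert,\dots,\Vert\mathbf{c}_k\Vert)$ and $\tilde C = C D^{-1}$ has unit-norm columns. By the very definition of the cosine matrix, $C^T C = D\,\Theta\,D$, and since $\sigma_{\min}(C)^2 = \lambda_{\min}(C^T C)$ it suffices to prove that $\lambda_{\min}(D\Theta D) \ge (\gamma + \tau^2 - 1)\,\Vert\mathbf{c}_1\Vert^2$. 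I would then split $\Theta = \mathbb{I} + E$, where $E := \Theta - \mathbb{I}$ is real symmetric with zero diagonal and off-diagonal entries $\theta_{ij}$, so that $D\Theta D = D^2 + D E D$ is a sum of two symmetric matrices.

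Next, Weyl's inequality gives $\lambda_{\min}(D\Theta D) \ge \lambda_{\min}(D^2) - \Vert DED\Vert_2$. For the first term, $\lambda_{\min}(D^2) = \min_i \Vert\mathbf{c}_i\Vert^2 \ge \tau^2\Vert\mathbf{c}_1\Vert^2$ by the hypothesis on $\tau$. For the second term, submultiplicativity of the operator norm and $\Vert D\Vert_2 = \max_i\Vert\mathbf{c}_i\Vert = \Vert\mathbf{c}_1\Vert$ give $\Vert DED\Vert_2 \le \Vert\mathbf{c}_1\Vert^2\,\Vert E\Vert_2$; and since $E$ is real symmetric its $2$-norm equals its spectral radius, which by Gershgorin's theorem is at most $\max_i\sum_{j\ne i}|\theta_{ij}| \le 1-\gamma$, where the last step uses the strict diagonal dominance of $\Theta$. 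Combining the two estimates,
\[
\sigma_{\min}(C)^2 \;\ge\; \tau^2\Vert\mathbf{c}_1\Vert^2 - (1-\gamma)\Vert\mathbf{c}_1\Vert^2 \;=\; (\gamma + \tau^2 - 1)\,\Vert\mathbf{c}_1\Vert^2,
\]
and the assumption $\gamma > 1 - \tau^2 > 0$ makes the right-hand side strictly positive, so taking square roots yields the stated bound.

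I do not anticipate a genuine obstacle here: the argument is just a congruence identity followed by a one-line perturbation (Weyl) bound and a Gershgorin estimate for the strictly diagonally dominant cosine matrix. The two points requiring a little care are the justification that $\Vert E\Vert_2 = \rho(E) \le 1-\gamma$ via symmetry plus Gershgorin — this may already be among the auxiliary facts collected in the Appendix — and the remark that the resulting estimate is non-vacuous precisely because $\gamma + \tau^2 - 1 > 0$. As a side observation, strict diagonal dominance in fact forces every eigenvalue of $\Theta$ to be at least $\gamma$, so $D(\Theta - \gamma\mathbb{I})D$ is positive semidefinite and, using that the smallest diagonal entry of $D^2$ is at least $\tau^2\Vert\mathbf{c}_1\Vert^2$, one even obtains $\sigma_{\min}(C)\ge \tau\sqrt{\gamma}\,\Vert\mathbf{c}_1\Vert$; since $(1-\tau^2)(\gamma-1)\le 0$ this is no weaker than the claimed bound, so the perturbation argument above is simply the most direct route to exactly the form stated in the lemma.
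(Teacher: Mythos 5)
Your proof is correct. Every step checks out: the congruence $C^TC = D\Theta D$, the split $D\Theta D = D^2 + DED$, Weyl's inequality, $\lambda_{\min}(D^2)\ge\tau^2\Vert\mathbf{c}_1\Vert^2$, and the estimate $\Vert E\Vert_2=\rho(E)\le\max_i\sum_{j\ne i}|\theta_{ij}|=1-\gamma$ via symmetry and Gershgorin, which together give exactly $(\gamma+\tau^2-1)\Vert\mathbf{c}_1\Vert^2$ as a lower bound for $\lambda_{\min}(C^TC)$. Your route differs from the paper's: the paper first invokes an auxiliary rescaling lemma (Lemma \ref{lem:SDD_rescaled_matrix}) to show that $D\Theta D$ is itself strictly diagonally dominant with row gap at least $(\gamma+\tau^2-1)\Vert\mathbf{c}_1\Vert^2$, and then applies Varah-type bound \eqref{eq:bound_sigma_min_SDD}, $\sigma_{\min}(A)>\alpha$ for an SDD matrix with gap $\alpha$. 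The underlying arithmetic is parallel — both arguments bound the off-diagonal mass of $D\Theta D$ by $\Vert\mathbf{c}_1\Vert^2(1-\gamma)$ and the diagonal from below by $\tau^2\Vert\mathbf{c}_1\Vert^2$ — but your version replaces the SDD machinery with the standard Weyl--Gershgorin perturbation argument, which is self-contained and arguably cleaner for a symmetric positive definite Gram matrix, whereas the paper's version reuses lemmas it needs elsewhere anyway. Your closing observation is also sound: Gershgorin applied to $\Theta$ directly gives $\lambda_{\min}(\Theta)\ge\gamma$, hence $\sigma_{\min}(C)\ge\tau\sqrt{\gamma}\,\Vert\mathbf{c}_1\Vert$, and since $\tau^2\gamma-(\gamma+\tau^2-1)=(1-\tau^2)(1-\gamma)\ge 0$ this is indeed at least as sharp as the stated bound.
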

\begin{proof}
Let us first show that if $\Theta$ is a strictly diagonally dominant matrix, then the symmetric positive definite matrix $ C^TC = D \Theta D$, $D= \diag(\Vert \mathbf{c}_i\Vert)$ is strictly diagonally dominant. This follows from Lemma \ref{lem:SDD_rescaled_matrix}, in the Appendix, since $|\theta_{ii}| = 1$ for all $1\leq i \leq k$, and by hypothesis we have $\gamma>1-\tau^2$.

Applying the bound \eqref{eq:bound_sigma_min_SDD}, we have
\begin{equation*}
    \sigma_{\min}(C^TC) \geq (\gamma+ \tau^2 - 1)  \Vert\mathbf{c}_{1}\Vert^2  \quad \Rightarrow \quad \sigma_{\min}(C) \geq \sqrt{\gamma+ \tau^2 - 1 } \Vert\mathbf{c}_{1}\Vert. 
\end{equation*}
\qed\end{proof}

The result above shows quite clearly that the bound on the smallest singular value of $C$ depends on the norms of the column vectors and on the angles between each pair of such columns. 
This suggests to choose $k$ columns of $A$, namely those with indices ${J} = \left\{ j_1,\dots,j_k \right\}$, $k\leq r$, such that the columns of the corresponding submatrix $C=[A]_{{J}}$ have a large euclidean norm, i.e. larger than the length defined by $\tau$, and they are well separated, meaning that the cosines of the pairwise angles are bounded by a parameter $\delta$. The overall procedure is called Deviation Maximization and it is presented in Algorithm \ref{alg:DM}.
\begin{algorithm}[h]
\caption{Deviation Maximization ${J} = DM(A, \mathbf{u},\tau, \delta)$}
\label{alg:DM}
\begin{algorithmic}[1]
\State ${J} = \left\{j : j = \max \mathbf{u} \right\}$ \label{alg:DM_step_initializeJ}
\State ${I} = \left\{i\ :\ u_i \geq \tau \max \mathbf{u},\ i\neq j \right\} $ \label{alg:DM_step_candidate}
\State set $k_{\max} = \card({I})$ 
\State compute the cosine matrix $\Theta$ associated to $[A]_{{I}}$ \label{alg:DM_step_cosinematrix}
\For{$i \in {I}$}
\If{$ |\theta_{i,j}| < \delta, \forall j \in {J}$}{\ ${J} = {J} \cup \left\{ i \right\}$}
\EndIf
\EndFor
\end{algorithmic}
\end{algorithm}

More precisely, let us define the vector $\mathbf{u}$ containing the column norms of $A$, namely $\mathbf{u} = (u_i) = (\Vert  \mathbf{a}_i \Vert) $, for $i=1,\dots,n$.
The set ${J}$ of column indices is initialized at step \ref{alg:DM_step_initializeJ} with the column index corresponding to the maximum column norm, namely  
\begin{equation*}
{J} = \left\{j : j = \argmax \mathbf{u} \right\}.
\end{equation*}
At step \ref{alg:DM_step_candidate}, we identify a set of ``candidate'' column indices ${I}$ to add in ${J}$ by selecting those columns with a large norm with respect to the threshold $\tau$, that is 
\begin{equation}
{I} = \left\{i\ :\ u_i \geq \tau \max \mathbf{u},\ i\neq j \right\}, \label{eq:def_candidate_set}
\end{equation}
and we compute the cosine matrix associated to the corresponding submatrix $\Theta = \Theta([A]_{{I}})$ at step \ref{alg:DM_step_cosinematrix}.
With a loop over the indices of the candidate set ${I}$, an index $i\in {I}$ is inserted in ${J}$ only if the $i$-th column forms a large angle (i.e. the corresponding cosine is small) with the columns whose index is already in ${J}$. In formulae, we ask
\begin{equation*}
    |\theta_{i,j}| <  \delta, \qquad \text{for all }j \in {J}.
\end{equation*}
At the end of the iterations, we have ${J} = \left\{j_1,\dots,j_k \right\}$, with $1 \leq k \leq k_{max}$, where $k_{\max}$ is the cardinality of the candidate set $ {I}$, and we set $C = [A]_{{J}}$. 
Notice that the following choice of the parameter $\delta$, namely 
\begin{equation}
    \delta \leq \delta_{\max} := \frac{\tau^2}{k_{\max}-1}, \label{eq:delta_max}
\end{equation}
yields a submatrix $C=[A]_{{J}}$ that satisfies the hypotheses of Lemma \ref{lem:tau_gamma} for a fixed choice of parameters $\delta$ and $\tau$. Indeed, for every $j \in {J}$, this choice ensures
\begin{equation*}
    \sum_{\substack{i\in{J} \\ i\neq j}} |\theta_{ij}| < (k-1) \delta_{\max} = (k-1)\frac{\tau^2}{k_{\max}-1} \leq (k_{\max}-1) \frac{\tau^2}{k_{\max}-1} =\tau^2, 
\end{equation*}
and hence, the corresponding gap $\gamma$ of diagonal dominance of the cosine matrix $\Theta_{{J},{J}}$ of $C$ satisfies
\begin{equation*}
    \gamma = 1-\sum_{j\in{J}\\j\neq i} |\theta_{ij}| > 1-\tau^2,
\end{equation*}
fulfilling the hypotheses of Lemma \ref{lem:tau_gamma} and, therefore, we have
\begin{equation}
    \sigma_{\min}(C) = \sigma_{\min}([A]_{{J}}) \geq 
    \sqrt{ \gamma + \tau^2 -1} \max_i \mathbf{u} > 0.
    \label{eq:A_C_satisfies_lem_tau_gamma}
\end{equation}

Let us briefly comment the choice of the parameter $\tau$: on one hand, its value should be small in order to get a large candidate set ${I}$;
on the other hand, equation \eqref{eq:delta_max} shows that a large value of $k_{\max} = \card({I})$ implies a small value of $\delta_{\max}$, which likely yields a smaller $k = \card\left({J}\right)$. 
Notice that when $\delta_{\max}$ is equal (or close) to zero, only pairwise (nearly) orthogonal columns are accepted to be inserted in the set ${J}$, and it is unlikely to find matrices with pairwise (nearly) orthogonal columns in real world problems.

The procedure here presented exploits diagonal dominance in order to ensure linear independence. In practice, this often turns out to be a too strong condition to be satisfied, and as a result the number $k$ of columns found is usually way smaller than the rank $r$. Indeed, diagonal dominance is sufficient but obviously not necessary, so in the final algorithm we choose a weaker condition. 

The Deviation Maximization may be adopted as a block pivoting strategy in various algorithms that deal with subset selection. Actually, an effective and efficient strategy to choose $\delta$ and $\tau$ must be tied in practice to the properties of the specific algorithm that uses the Deviation Maximization as pivoting strategy. In this work we successfully apply the Deviation Maximization block pivoting to the problem of finding a Rank-Revealing QR decomposition, but e.g. the authors experimented also a preliminary version of this procedure in the context of active set methods, see \cite{DMV20DRNA, DMV20Ma}. 

\section{Rank-Revealing QR decompositions} 
\label{sec_RRQR_factorizations}
Let us introduce the mathematical formulation for the problem of finding a rank-revealing decomposition of a matrix $A$ of size $m \times n$.
We say that the matrix $A$ has numerical rank $1\leq r \leq \min(m,n)$ if $\sigma_{r+1}(A) \ll \sigma_{r}(A)$ and $\sigma_{r+1}(A) \approx \epsilon$, see \cite{CIp94}, where $\epsilon$ is the \textit{machine precision}. 
Let $\Pi$ denote a permutation matrix of size $n$, then we can compute
\begin{equation}
    A\Pi = QR = \left( Q_1\ Q_2 \right) \left( \begin{array}{cc}
        R_{11} & R_{12}  \\
        \mathbb{O} & R_{22} 
    \end{array} \right), 
    \label{eq:QRpivoting}
\end{equation}
where $Q$ is an orthogonal matrix of order $m$, $Q_1 \in m \times r$ and $Q_2 \in m \times (m-r)$,
$R_{11}$ is upper triangular of order $r$, $R_{12} \in r \times (n-r)$ and $R_{22}\in (m-r) \times (n-r)$. 
The QR factorization above is called \emph{rank-revealing} if
\begin{equation*}
    \sigma_{\min}(R_{11}) = \sigma_r(R_{11}) \approx \sigma_r(A),
\end{equation*}
or
\begin{equation*}
    \sigma_{\max}(R_{22}) = \sigma_1(R_{22}) \approx \sigma_{r+1}(A),
\end{equation*}
or both conditions hold. Notice that if $\sigma_{\min}(R_{11})\gg \epsilon$ and $\Vert R_{22} \Vert$ is small, then the matrix $A$ has numerical rank $r$, but the converse is not true. In other words, even if $A$ has $(\min(m,n)-r)$ small singular values, it is not guaranteed that any permutation $\Pi$ yields a small $\Vert R_{22} \Vert$.
It is easy to show that for any factorization like \eqref{eq:QRpivoting} the following relations hold 
\begin{align}
    & \sigma_{\min}(R_{11}) \leq \sigma_r(A), \label{eq:interlacing1}\\
    & \sigma_{\max}(R_{22}) \geq \sigma_{r+1}(A). \label{eq:interlacing2}
\end{align}
The proof follows is an easy application of the interlacing inequalities for singular values \cite{Tho72}, namely
\begin{equation*}
    \sigma_{k}(A) \geq \sigma_{k} (B) \geq \sigma_{k+r+s}(A), \quad k \geq 1,
\end{equation*}
which hold for any $(m-s)\times (n-r)$ submatrix $B$ of $A$. In fact we have
\begin{align*}
    & \sigma_{\min}(R_{11}) = \sigma_{\min}\left(\begin{array}{c}
         R_{11} \\
         \mathbb{O}
    \end{array}\right) = \sigma_{r}([Q^TA\Pi]_{1:m,1:r}) \leq \sigma_{r}(Q^TA\Pi) = \sigma_{r}(A), \\
    & \sigma_{\max}(R_{22}) = \sigma_{\max}(\mathbb{O}\ R_{22}) = \sigma_{1}([Q^TA\Pi]_{r+1:m,1:n}) \geq \sigma_{r+1}(Q^TA\Pi)
    = \sigma_{r+1}(A).
\end{align*}
We also used the invariance of the singular values under orthogononal transformations and under the insertion of a zero block, see equations (\ref{eq:sv_invariance_leftmult}-\ref{eq:sv_invariance_zeroblock}) in the Appendix.
Ideally, the best rank-revealing QR decomposition is obtained by the column permutation $\Pi$ which solves
\begin{equation}
    \max_{\Pi} \sigma_{\min} (R_{11}). 
    \label{eq:NPhard}
\end{equation}
However, the problem above clearly has a combinatorial nature. Therefore, algorithms that compute RRQR usually provide  (see, e.g. \cite{CIp94,HPa92}) at least one of the following bounds
\begin{align}
    \sigma_{\min}(R_{11}) &\geq \frac{\sigma_r(A)}{p(n)}, \label{eq:RRproperty_sigmamin}\\
    \sigma_{\max}(R_{22}) &\leq \sigma_{r+1}(A){q(n)}, \label{eq:RRproperty_sigmamax}
\end{align}
where $p(n)$ and $q(n)$ are low degree polynomials in $n$. These are worst case bounds and are usually not sharp. We provide a bound of type \eqref{eq:RRproperty_sigmamin} in Sec. \ref{sec_Worst-case bound}.

\subsection{QR factorization with column pivoting}
Let us introduce the QR factorization with column pivoting proposed by \citet{BGo65}, which can be labeled as a greedy approach in order to cope with the combinatorial optimization problem \eqref{eq:NPhard}.
Suppose at the $s$-th algorithmic step we have already selected $s<r$ well-conditioned columns of $A$, which are moved to the leading positions by the permutation matrix $\Pi^{(s)}$ as follows
\begin{equation}
     A \Pi^{(s)} = Q^{(s)} R^{(s)} = 
     Q^{(s)}
    \left(\begin{array}{cc}
        R_{11}^{(s)} & R_{12}^{(s)} \\
         & R_{22}^{(s)} 
    \end{array} \right), \label{eq:RRQR_step_s}
\end{equation}
where $R_{11}^{(s)}$ is an upper triangular block of size $s \times s$, and the blocks $ R_{12}^{(s)}$ and $ R_{22}^{(s)}$ have size $s\times(n-s)$ and $(m-s)\times(n-s)$ respectively. The block $R_{22}^{(s)}$ is what is left to be processed, and it is often called the ``trailing matrix". Let us introduce the following column partitions for $R_{12}^{(s)}$, $R_{22}^{(s)}$ respectively
\begin{align*}
    R_{12}^{(s)} &= \left( \mathbf{b}_1 \dots \mathbf{b}_{n-s} \right), \\
    R_{22}^{(s)} &= \left( \mathbf{c}_1 \dots \mathbf{c}_{n-s} \right).
\end{align*}
We aim at selecting, within the $n-s$ remaining columns, that column such that the condition number of the block $R_{11}^{(s+1)}$ is kept the largest possible. Formally, we would like to solve 
\begin{equation}
    \sigma_{\min}\left(\begin{array}{cc}
        R_{11}^{(s)} & \mathbf{b}_{j} \\
         & \mathbf{c}_{j}
    \end{array} \right) = 
    \max_{1 \leq i \leq n-s} \sigma_{\min}\left(\begin{array}{cc}
        R_{11}^{(s)} & \mathbf{b}_{i} \\
         & \mathbf{c}_{i} 
    \end{array} \right).
\label{eq:QRP_greedystep_k1}
\end{equation} 

Using the following fact
\begin{equation*}
    \sigma_{\min}\left(\begin{array}{cc}
        R_{11}^{(s)} & \mathbf{b}_{j} \\
         & \mathbf{c}_{j}
    \end{array} \right) 
    = 
    \sigma_{\min}\left(\begin{array}{cc}
        R_{11}^{(s)} & \mathbf{b}_{j} \\
         & \Vert \mathbf{c}_j \Vert
    \end{array} \right),
\end{equation*}
which is a simple consequence of the invariance of singular values under left multiplication by orthogonal matrices and the insertion of null rows, see \eqref{eq:sv_invariance_zeroblock} in the Appendix, and using the bound \eqref{eq:bound_smallest_sv}, we can approximate (up to a factor $\sqrt{s+1}$) the smallest singular value as 
\begin{equation*}
    \sigma_{\min}\left(\begin{array}{cc}
        R_{11}^{(s)} & \mathbf{b}_{j} \\
         & \mathbf{c}_{j}
    \end{array} \right) 
    \approx \min_{h} \left\Vert   \mathbf{e}_h^T \left(\begin{array}{cc}
        R_{11}^{(s)} & \mathbf{b}_{j} \\
         & \Vert \mathbf{c}_j \Vert
    \end{array} \right)^{-1} \right\Vert^{-1} ,
\end{equation*}
where $\mathbf{e}_h$ is the $h$-th element of the canonical basis of $\mathbb{R}^{s+1}$. 

Using this result, as argued in \cite{CIp94}, the maximization problem \eqref{eq:QRP_greedystep_k1} can be solved approximately by solving
\begin{equation*}
    j =  \argmax_{1 \leq i \leq n-n_s} \Vert \mathbf{c}_j \Vert \approx 
    \argmax_{1 \leq i \leq n-n_s} \sigma_{\min}\left(\begin{array}{cc}
        R_{11}^{(s)} & \mathbf{b}_{i} \\
         & \mathbf{c}_{i}
    \end{array} \right) .
\end{equation*} 
The resulting procedure is referred as QR factorization with column pivoting, and it is presented in Algorithm \ref{alg:QRpivoting}.
\begin{algorithm}[h]
\caption{QR with column pivoting $QRP(A)$}
\label{alg:QRpivoting}
\begin{algorithmic}[1]
\State initialize the vector $\mathbf{u}$ of squared norms 
\For{$s=0,\dots, n-1$}
\State $ j =  \argmax_i [\mathbf{u}]_{s+1:n}$
\State move the $j$-th index to the leading position on $A$ and $\mathbf{u}$
\State compute the Householder reflector $\mathbf{v}^{(s)}$ w.r.t. $[A]_{s:m,s}$
\State update the trailing matrix $[A]_{s:m,s+1:n} -= \mathbf{v}^{(s)}(\mathbf{v}^{(s)})^T\ [A]_{s:m,s+1:n}$
\State update the partial column norms $\mathbf{u}$
\EndFor
\end{algorithmic}
\end{algorithm}
This algorithm can be efficiently implemented since the column norms of the trailing matrix can be updated at each iteration instead of being recomputed from scratch. This can be done \cite{golub-libro} by exploiting the following property
\begin{equation*}
    Q \mathbf{a} =
    \begin{array}{cc}
    \left(\begin{array}{c}
         \beta \\
         \mathbf{c} 
    \end{array}\right)
    &
    \begin{array}{c}
        1 \\
        m-1
    \end{array}
    \end{array}  \Rightarrow \Vert \mathbf{a} \Vert^2 =  \Vert Q \mathbf{a} \Vert^2 = \beta^2 + \Vert \mathbf{c} \Vert^2,
\end{equation*}
which holds for any orthogonal matrix $Q$ and any vector $\mathbf{a}$ of order $m$.
Therefore, once defined the vector $\mathbf{u}^{(s)}$ whose entry $u^{(s)}_j$ is the $j$-th partial column norm of $A\Pi^{(s)}$, that is the norm of the subcolumn with row indices ranging from $m-n_s$ to $m$, and initialized $u^{(1)}_j = \Vert \mathbf{a}_j \Vert^2 $, with $1 \leq j \leq n$, we can perform the following update
\begin{equation}
    u^{(s+1)}_j = \begin{cases}
    \sqrt{(u^{(s)}_j)^2 - r_{sj}^2}, & s+1 \leq j \leq n, \quad 2 \leq s \leq n, \\
    0, & j < s+1, 
    \end{cases}
\label{eq:norm_update_colpivoting}
\end{equation}
where $r_{ij}$ is the entry of indices $(i,j)$ in $R^{(s)}$, $1\leq i\leq m$,  $1\leq j\leq n$. The partial column norm update allows to reduce the operation count from $\mathcal{O}(mn^2)$ to $\mathcal{O}(mn)$. Actually, the formula \eqref{eq:norm_update_colpivoting} cannot be applied as it is because of numerical cancellation, and it needs to  modified, see e.g. \cite{DBu08} for a robust implementation. A block version of Algorithm \ref{alg:QRpivoting} has been proposed \cite{QSB98}, and it is currently implemented in LAPACK's \texttt{xgeqp3} routine, that we will use in the numerical section for comparison.  

\begin{rem}{Geometric interpretation:} Introduce the following block column partitioning
    $R^{(s)} = (R_1^{(s)}\ R_2^{(s)})$, $Q^{(s)} = (Q_1^{(s)}\ Q_2^{(s)})$, and recall that we have 
    \begin{equation*}
     \mathcal{R}\left( Q_1^{(s)}\right) =\mathcal{R}\left( R_1^{(s)}\right) \quad \mathcal{R}\left( Q_2^{(s)}\right) =\mathcal{R}\left( R_1^{(s)}\right)^{\perp}.  
    \end{equation*}
    where $\mathcal{R}(B)$ denotes the subspace spanned by the columns of a matrix $B$. Every unprocessed column of $A$ rewrites as 
    \begin{equation*}
        \mathbf{a}_j = Q_1^{(s)}\mathbf{b}_{j-s} + Q_2^{(s)}\mathbf{c}_{j-s}, 
    \end{equation*}
    where $Q_1^{(s)}\mathbf{b}_{j-s}$ and $Q_2^{(s)}\mathbf{c}_{j-s}$ are the orthogonal projection of $\mathbf{a}_j$ on $\mathcal{R}\left( R_1^{(s)}\right)$ and $\mathcal{R}\left( R_1^{(s)}\right)^{\perp}$ respectively. The most linearly independent column $\mathbf{a}_i$ from the ones already processed can be seen as the one with the largest orthogonal projection of the complement on the subspace spanned by such columns, namely
    \begin{equation*}
        \max_{i\geq s} \left\Vert \mathcal{P}_{\mathcal{R}\left( R_1^{(s)}\right)^{\perp}} \mathbf{a}_i\right\Vert = \max_{i\geq 1} \left\Vert Q_2^{(s)}\mathbf{c}_i\right\Vert.
    \end{equation*}
    However, the matrix $Q^{(s)}$ is never directly available unless it is explicitly computed. We then settle for the the solution of the maximization problem $$\max_{i\geq 1} \Vert \mathbf{c}_i\Vert.$$
\end{rem}

\subsection{QR factorization with Deviation Maximization pivoting}
Consider the partial factorization in eq. \eqref{eq:RRQR_step_s}, and now suppose at the $s$-th algorithmic step we have already selected $n_s$, with $s \leq n_s<r$, well-conditioned columns of $A$, so that $R_{11}^{(s)}$ has size $n_s \times n_s$, while blocks $R_{12}^{(s)}$ and $R_{22}^{(s)}$ have size $n_s \times (n-n_s)$ and $(m-n_s) \times (n-n_s)$ respectively.
The idea is to pick $k_s$, with $n_{s+1} = n_s+k_s \leq r$, linearly independent and well-conditioned columns from the remaining $n-n_s$ columns of $A$, which are also sufficiently linearly independent from the $n_s$ columns already selected, in order to keep the smallest singular value of the $R_{11}$ block as large as possible.
We aim at selecting those columns with indices $j_1, \dots, j_{k_s}$ that solve
\begin{equation}
    \sigma_{\min}\left(\begin{array}{cccc}
        R_{11}^{(s)} & \mathbf{b}_{j_1} & \dots & \mathbf{b}_{j_{k_s}}\\
         & \mathbf{c}_{j_1} & \dots& \mathbf{c}_{j_{k_s}}
    \end{array} \right) = 
    \max_{1 \leq i_1,\dots,i_{k_s} \leq n-n_s} \sigma_{\min}\left(\begin{array}{cccc}
        R_{11}^{(s)} & \mathbf{b}_{i_1} & \dots & \mathbf{b}_{i_{k_s}}\\
         & \mathbf{c}_{i_1} & \dots& \mathbf{c}_{i_{k_s}}
    \end{array} \right).
    \label{eq:QRDM_greedystep_k1}
\end{equation}
Of course, this maximization problem has the same combinatorial nature as problem \eqref{eq:NPhard}, so we rather solve it approximately. We propose to approximate the indices $\left\{ {j_1}, \dots, {j_{k_s}} \right\} $ that solves problem \eqref{eq:QRDM_greedystep_k1} with the indices selected by means of the Deviation Maximization procedure presented in Algorithm \ref{alg:DM} applied to the trailing matrix $R_{22}^{(s)}$. For the moment, consider the parameter $\tau$ and $\delta$ fixed accordingly to equation \eqref{eq:delta_max}. More efficient choices will be widely discussed in Section \ref{sec:numerical_experiments}.
For sake of brevity, we will denote by $B = (\mathbf{b}_{j_1} \dots \mathbf{b}_{j_{k_s}})$ and $C = (\mathbf{c}_{j_1} \dots \mathbf{c}_{j_{k_s}})$ the matrices made up of the columns selected, and by $\bar{B}$ and $\bar{C}$ the matrices made up by the remaining columns.
The rest of the block update, which we detail below, proceeds in a way similar to the recursive block QR.
Let $ \tilde{Q}^{(s+1)}$ be an orthogonal matrix of order $(m-n_s)$ such that
\begin{equation}
    \left(\tilde{Q}^{(s)}\right)^T C =
     \left( \begin{array}{c}
        T \\
        \mathbb{O}
    \end{array}\right) \in \mathbb{R}^{(m-n_s)\times k_s},
    \label{eq:QR_of_C}
\end{equation}
where $T$ is an upper triangular matrix of order $k_s$. The matrix $\tilde{Q}^{(s+1)}$ is obtained as a product of $k_s$ Householder reflectors, that we represent by mean of the so-called compact WY form \cite{SVL89} as
\begin{equation*}
    \tilde{Q}^{(s)} = \mathbb{I} - Y^{(s)} W^{(s)} (Y^{(s)})^T,
\end{equation*}
where $Y^{(s)}$ is lower trapezoidal with $k_s$ columns and $W^{(s)}$ is upper triangular of order $k_s$. This allows us to carry out the update of the rest of trailing matrix, that is 
\begin{equation}
    \left(\tilde{Q}^{(s)}\right)^T \bar{C} =  \left(\begin{array}{c}
         \bar{T} \\
         R_{22}^{(s+1)}
    \end{array}\right)  \in \mathbb{R}^{(m-n_s)\times (n-n_s-k_s)},
    \label{eq:QR_of_C_applied_to_Cp}
\end{equation}
by means of BLAS-3 kernels, for performance efficiency.
Denoting by $\tilde{\Pi}^{(s)}$ a permutation matrix that moves columns with indices $j_1,\dots,j_{k_s}$ to the current leading positions, we set $\Pi^{(s+1)} = \Pi^{(s)}\tilde{\Pi}^{(s)}$ and 
\begin{equation*}
    Q^{(s+1)} = Q^{(s)}
    \left(\begin{array}{cc}
        \mathbb{I} &  \\
         & \tilde{Q}^{(s)}
    \end{array}\right) \in \mathbb{R}^{m\times m},
\end{equation*}
then the overall factorization of $A \Pi^{(s+1)}$ takes the form
\begin{equation}
     Q^{(s)}
    \left(\begin{array}{ccc}
        R_{11}^{(s)} & B & \bar{B} \\
         &  C & \bar{C}
    \end{array} \right)=  Q^{(s+1)}
    \left(\begin{array}{ccc}
        R_{11}^{(s)} & B & \bar{B} \\
          & T & \bar{T} \\
         &   & R_{22}^{(s+1)}
    \end{array} \right), 
    \label{eq:block_partition_sp1}
\end{equation}
where, for the successive iteration, we set
\begin{equation*}
    \begin{aligned}
    & R_{11}^{(s+1)} =         \left(\begin{array}{cc}
        R_{11}^{(s)} & B \\
          & T \\
    \end{array} \right) \in \mathbb{R}^{n_{s+1}\times n_{s+1}}, \\
        & R_{12}^{(s+1)} =         \left(\begin{array}{c}
         \bar{B} \\
         \bar{T}
    \end{array} \right) \in \mathbb{R}^{n_{s+1}\times(n-n_{s+1})}, \\
    \end{aligned}
\end{equation*}
with $n_{s+1} = n_s+k_s$. The resulting procedure is called QR factorisation with Deviation Maximization pivoting and it presented in Algorithm \ref{alg:QRDM}. 
\begin{algorithm}[h]
\caption{QR with Deviation Maximization pivoting $QRDM(A, \tau, \delta)$}
\label{alg:QRDM}
\begin{algorithmic}[1]
\State set $n_s=0$ and initialize the vector $\mathbf{u}$ of squared norms 
\While{$n_s<n$}
\State $\left\{j_1,\dots,j_{k_s}\right\} = DM([A]_{n_s+1:m,n_s:n}, [\mathbf{u}]_{n_s+1:n},\tau, \delta)$
\State move columns $ \left\{j_1,\dots,j_{k_s}\right\}$ to the leading positions of $A$ and $\mathbf{u}$
\For{$l=1,\dots, k_s$}
\State compute the Householder reflector $\mathbf{v}^{(n_s+l)}$ w.r.t. $[A]_{n_s+l:m,n_s+l}$
\State update the remaining columns $[A]_{n_s+l:m,n_s+l+1:n_s+k_s} -= \mathbf{v}^{(n_s+l)}(\mathbf{v}^{(n_s+l)})^T\ [A]_{n_s+l:m,n_s+l+1:n_s+k_s}$
\EndFor
\State compute the compact WY representation of $\mathbf{v}^{(n_s+1)},\dots, \mathbf{v}^{(n_s+k_s)}$ \label{alg:step_QRDM_WYrepr}
\State block update $[A]_{n_s+1:m,n_s+k_s+1:n} -= Y^{(s)}\ (W^{(s)})^T\ (Y^{(s)})^T\ [A]_{n_s+1:m,n_s+k_s+1:n}$
\State update the partial column norms $\mathbf{u}$
\State $n_s = n_s+k_s$
\EndWhile
\end{algorithmic}
\end{algorithm}

Last, we point out that the partial column norms can be updated at each iteration also in this case with some straightforward changes of equation \eqref{eq:norm_update_colpivoting}, namely   
\begin{equation*}
    u^{(s+1)}_j =
    \begin{cases}
    \sqrt{(u^{(s)}_j)^2 - \displaystyle \sum_{l=n_s}^{n_{s+1}} r_{lj}^2}, & n_{s+1} < j \leq n, \quad n_{s+1} \leq n, \\
    0, & j \leq n_{s+1}.
    \end{cases}
\end{equation*}

The QRP algorithm has the particular feature that the diagonal elements of the final upper triangular factor $R$ are monotonically non increasing in modulus. This property cannot be guaranteed by the QRDM algorithm, as by other recently proposed pivoting strategies \cite{DGG15}. In practice, there are small fluctuations around a non-increasing trend. 

\subsection{Worst-case bound on the smallest singular value}
\label{sec_Worst-case bound}

Let us denote by $\bar{\sigma}^{(s)}$ the smallest singular value of the computed $R_{11}^{(s)}$ block at step $s$, that is
\begin{equation*}
    \bar{\sigma}^{(s)} = \sigma_{\min}\left(R_{11}^{(s)} \right).
\end{equation*}
Notice that it corresponds exactly to the $s$-th singular value of $R_{11}^{(s)}$ computed with the standard column pivoting, while it corresponds to the $n_s$-th singular value when $R_{11}^{(s)}$ is computed with the Deviation Maximization.
Let us first report from \cite{CIp94} an estimate of $\bar{\sigma}^{(s+1)}$ for QRP.
\begin{thm}
Let $R_{11}^{(s)}$ be the upper triangular factor of order $s$ computed by QRP. We have
\begin{equation*}
    \bar{\sigma}_{s+1} \geq \sigma_{s+1}(A) \frac{\bar{\sigma}_{s}}{\sigma_1(A)}\frac{1}{\sqrt{2(n-s)(s+1)}}.
\end{equation*}
\end{thm}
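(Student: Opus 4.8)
The plan is to analyze a single step $s\mapsto s+1$ of Algorithm~\ref{alg:QRpivoting}, starting from the partial factorization~\eqref{eq:RRQR_step_s} (with $n_s=s$), and to track how $\bar\sigma_s=\sigma_{\min}(R_{11}^{(s)})$ propagates; write $R_{11}:=R_{11}^{(s)}$ for brevity. At step $s+1$ column pivoting moves to the leading position the column $j$ of the trailing block with $\Vert\mathbf c_j\Vert=\max_{1\leq i\leq n-s}\Vert\mathbf c_i\Vert$; putting $\gamma:=\Vert\mathbf c_j\Vert$ and using the invariance of singular values under left orthogonal multiplication and insertion of null rows (already invoked in the text preceding~\eqref{eq:QRP_greedystep_k1}, cf.~\eqref{eq:sv_invariance_zeroblock}), the new leading block satisfies
\[
\bar\sigma_{s+1}=\sigma_{\min}\left(R_{11}^{(s+1)}\right)=\sigma_{\min}\left(\begin{array}{cc} R_{11} & \mathbf b_j\\ & \gamma\end{array}\right)=:\sigma_{\min}(R).
\]
I would then split the estimate into (a) a lower bound for $\gamma$, and (b) a lower bound for $\sigma_{\min}(R)$ in terms of $\gamma$, $\bar\sigma_s$ and $\sigma_1(A)$, and finally multiply the two.

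Part (a): since $\gamma$ is the largest of the $n-s$ column norms of $R_{22}^{(s)}$, one has $(n-s)\gamma^2\geq\sum_{i=1}^{n-s}\Vert\mathbf c_i\Vert^2=\Vert R_{22}^{(s)}\Vert_F^2\geq\sigma_{\max}(R_{22}^{(s)})^2\geq\sigma_{s+1}(A)^2$, the last step being the interlacing bound~\eqref{eq:interlacing2} applied to~\eqref{eq:RRQR_step_s} with $r=s$; hence $\gamma\geq\sigma_{s+1}(A)/\sqrt{n-s}$. I would also record the ``free'' estimates coming from orthogonality of $Q^{(s)}$: the column $\left(\begin{smallmatrix}\mathbf b_j\\ \mathbf c_j\end{smallmatrix}\right)$ of $R^{(s)}$ has the same Euclidean norm as the corresponding column of $A$, so $\Vert\mathbf b_j\Vert^2+\gamma^2\leq\sigma_1(A)^2$, and in particular $\Vert\mathbf b_j\Vert\leq\sigma_1(A)$ and $\gamma\leq\sigma_1(A)$; also $\bar\sigma_s\leq\sigma_s(A)\leq\sigma_1(A)$ by~\eqref{eq:interlacing1}.

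Part (b): here I would use $\sigma_{\min}(R)=\Vert R^{-1}\Vert_2^{-1}\geq\Vert R^{-1}\Vert_F^{-1}$ together with the explicit block inverse $R^{-1}=\left(\begin{smallmatrix}R_{11}^{-1} & -\gamma^{-1}R_{11}^{-1}\mathbf b_j\\ & \gamma^{-1}\end{smallmatrix}\right)$, so that $\Vert R^{-1}\Vert_F^2=\Vert R_{11}^{-1}\Vert_F^2+\gamma^{-2}\Vert R_{11}^{-1}\mathbf b_j\Vert^2+\gamma^{-2}$. Bounding $\Vert R_{11}^{-1}\Vert_F^2\leq s\,\bar\sigma_s^{-2}$ and $\Vert R_{11}^{-1}\mathbf b_j\Vert^2\leq\bar\sigma_s^{-2}\Vert\mathbf b_j\Vert^2\leq\bar\sigma_s^{-2}\sigma_1(A)^2$, then using $\gamma\leq\sigma_1(A)$ and $\bar\sigma_s\leq\sigma_1(A)$ to homogenize the three terms to the common factor $\sigma_1(A)^2/(\bar\sigma_s^2\gamma^2)$, one obtains $\Vert R^{-1}\Vert_F^2\leq (s+2)\,\sigma_1(A)^2/(\bar\sigma_s^2\gamma^2)\leq 2(s+1)\,\sigma_1(A)^2/(\bar\sigma_s^2\gamma^2)$, i.e. $\sigma_{\min}(R)\geq\bar\sigma_s\,\gamma/(\sqrt{2(s+1)}\,\sigma_1(A))$. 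One could equally apply the appendix bound~\eqref{eq:bound_smallest_sv} to the $(s+1)\times(s+1)$ matrix $R$ and bound the row norms of $R^{-1}$ directly; and using the sharper $\Vert\mathbf b_j\Vert^2+\gamma^2\leq\sigma_1(A)^2$ in place of $\Vert\mathbf b_j\Vert\leq\sigma_1(A)$ would even remove the factor $\sqrt2$.

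Putting (a) and (b) together yields
\[
\bar\sigma_{s+1}=\sigma_{\min}(R)\geq\frac{\bar\sigma_s}{\sqrt{2(s+1)}\,\sigma_1(A)}\cdot\frac{\sigma_{s+1}(A)}{\sqrt{n-s}}=\sigma_{s+1}(A)\,\frac{\bar\sigma_s}{\sigma_1(A)}\,\frac{1}{\sqrt{2(n-s)(s+1)}},
\]
which is the claim. I expect part (b) to be the only delicate point: one must bound $\Vert R^{-1}\Vert$ without losing more than a constant, controlling the off-diagonal block $-\gamma^{-1}R_{11}^{-1}\mathbf b_j$ of the inverse using solely $\sigma_{\min}(R_{11})=\bar\sigma_s$ and the cheap norm bound on $\mathbf b_j$; the precise polynomial $\sqrt{2(n-s)(s+1)}$ is exactly what this balancing, combined with the column-norm maximization of part (a), produces. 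Everything else reduces to the singular-value interlacing inequalities and routine bookkeeping.
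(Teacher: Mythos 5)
Your argument is correct. Note first that the paper itself does not prove this statement: it is reported verbatim from \cite{CIp94} as background, so there is no in-paper proof to compare against line by line. Your proof is a sound, self-contained reconstruction. Part (a) is the standard pivoting estimate $\gamma=\max_i\Vert\mathbf c_i\Vert\geq \Vert R_{22}^{(s)}\Vert_F/\sqrt{n-s}\geq \sigma_{\max}(R_{22}^{(s)})/\sqrt{n-s}\geq \sigma_{s+1}(A)/\sqrt{n-s}$, using \eqref{eq:interlacing2}; part (b) correctly inverts the bordered triangular block and the bookkeeping checks out: the three terms of $\Vert R^{-1}\Vert_F^2$ homogenize to $(s+2)\,\sigma_1(A)^2/(\bar\sigma_s^2\gamma^2)\leq 2(s+1)\,\sigma_1(A)^2/(\bar\sigma_s^2\gamma^2)$, and even your parenthetical sharpening (grouping $(s-1)\gamma^2+(\Vert\mathbf b_j\Vert^2+\gamma^2)+\bar\sigma_s^2\leq(s+1)\sigma_1(A)^2$) is valid. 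The only stylistic difference from the paper's own technique --- visible in its proof of the analogous QRDM bound --- is that there the minimum singular value is estimated through the row norms of the block inverse via \eqref{eq:bound_smallest_sv}, at the price of a factor $\sqrt{s+1}$, whereas you bound $\Vert R^{-1}\Vert_2$ by $\Vert R^{-1}\Vert_F$ directly; the two devices are interchangeable here and yield the same polynomial $\sqrt{2(n-s)(s+1)}$. Two trivial edge cases you may wish to mention for completeness: if $\bar\sigma_s=0$ or $\gamma=0$ the claimed inequality is vacuous (the right-hand side vanishes, the latter because $\sigma_{s+1}(A)\leq\sigma_{\max}(R_{22}^{(s)})=0$), so invertibility of $R_{11}^{(s)}$ and of the bordered block may be assumed without loss of generality.
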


Before coming to the main result, we introduce the following auxiliary Lemma.

\begin{lem}\label{lem:bound_sigma_T} 
With reference to the notation used for introducing the block partition in \eqref{eq:block_partition_sp1}, we have
\begin{equation}
    \sigma_{\min}(T) \geq \frac{\sqrt{\gamma+ \tau^2 - 1 }}{\sqrt{n-n_{s+1}+1}} \sigma_{n_{s+1}}(A).
\end{equation}
\end{lem}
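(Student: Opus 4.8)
The plan is to obtain the bound by chaining three facts: (i) orthogonal invariance of singular values, to replace $T$ by the selected block $C$; (ii) the estimate of Lemma~\ref{lem:tau_gamma}, already specialized to $C$ in \eqref{eq:A_C_satisfies_lem_tau_gamma}, to replace $\sigma_{\min}(C)$ by the largest column norm of the trailing matrix $R_{22}^{(s)}$; and (iii) a singular-value interlacing argument, carried out exactly as in the derivation of \eqref{eq:interlacing2} (i.e. keeping the zero block inside the submatrix), to bound that column norm from below by $\sigma_{n_{s+1}}(A)$.

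\emph{Reduction to a column norm.} By \eqref{eq:QR_of_C} we have $(\tilde Q^{(s)})^{T}C=\binom{T}{\mathbb{O}}$ with $\tilde Q^{(s)}$ orthogonal, hence $\sigma_{\min}(T)=\sigma_{\min}\!\binom{T}{\mathbb{O}}=\sigma_{\min}(C)$ by invariance under orthogonal multiplication and zero-row insertion, cf. \eqref{eq:sv_invariance_leftmult}--\eqref{eq:sv_invariance_zeroblock}. Since $\tau,\delta$ are chosen so that \eqref{eq:delta_max} holds, $C=[R_{22}^{(s)}]_{J}$ satisfies the hypotheses of Lemma~\ref{lem:tau_gamma}, and \eqref{eq:A_C_satisfies_lem_tau_gamma} gives $\sigma_{\min}(C)\ge\sqrt{\gamma+\tau^2-1}\,\|\mathbf c_{j_1}\|$, where $\mathbf c_{j_1}$ is the first selected column; by step~\ref{alg:DM_step_initializeJ} of Algorithm~\ref{alg:DM} applied to $R_{22}^{(s)}$, this is precisely the column of $R_{22}^{(s)}$ of maximal Euclidean norm.

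\emph{The main step: interlacing.} It remains to prove $\|\mathbf c_{j_1}\|\ge\sigma_{n_{s+1}}(A)/\sqrt{n-n_{s+1}+1}$. Look at the factorization of \eqref{eq:block_partition_sp1} before the $\tilde Q^{(s)}$-update: the last $m-n_s$ rows of $(Q^{(s)})^{T}A\,\Pi^{(s+1)}$ are $(\mathbb{O}\mid C\mid\bar C)$, the leading $\mathbb{O}$ being $(m-n_s)\times n_s$. Retain the $n_s$ zero columns, the single column $\mathbf c_{j_1}$, and all of $\bar C$, discarding only the remaining $k_s-1$ columns of $C$; call the result $N:=(\mathbb{O}\mid\mathbf c_{j_1}\mid\bar C)$, which is an $(m-n_s)\times(n-k_s+1)$ submatrix of $(Q^{(s)})^{T}A\,\Pi^{(s+1)}$, a matrix with the same singular values as $A$. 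Interlacing with $n_s$ deleted rows and $k_s-1$ deleted columns yields $\sigma_1(N)\ge\sigma_{1+n_s+(k_s-1)}(A)=\sigma_{n_{s+1}}(A)$. Deleting the zero columns leaves $\sigma_1$ unchanged, so $\sigma_1\big((\mathbf c_{j_1}\mid\bar C)\big)=\sigma_1(N)\ge\sigma_{n_{s+1}}(A)$. Finally $(\mathbf c_{j_1}\mid\bar C)$ has $n-n_{s+1}+1$ columns, and $\mathbf c_{j_1}$ has the largest norm among all columns of $R_{22}^{(s)}$, hence among those of $(\mathbf c_{j_1}\mid\bar C)$, so the elementary inequality $\max_j\|M_{\cdot j}\|\ge\|M\|_2/\sqrt{(\text{number of columns})}$ gives $\|\mathbf c_{j_1}\|\ge\sigma_1\big((\mathbf c_{j_1}\mid\bar C)\big)/\sqrt{n-n_{s+1}+1}\ge\sigma_{n_{s+1}}(A)/\sqrt{n-n_{s+1}+1}$. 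Combining with the previous step gives $\sigma_{\min}(T)\ge\dfrac{\sqrt{\gamma+\tau^2-1}}{\sqrt{n-n_{s+1}+1}}\,\sigma_{n_{s+1}}(A)$.

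I expect the genuine obstacle to be this interlacing step, specifically getting the denominator right: the naive route $\|\mathbf c_{j_1}\|\ge\sigma_1(R_{22}^{(s)})/\sqrt{n-n_s}\ge\sigma_{n_s+1}(A)/\sqrt{n-n_s}$ has the wrong column count and the wrong singular-value index. The fix is to (a) keep the $\mathbb{O}$ block inside the chosen submatrix, so interlacing only charges the $n_s$ deleted rows and not $n_s$ deleted columns — precisely the device already used for \eqref{eq:interlacing2} — and (b) retain $\mathbf c_{j_1}$ itself as one of the kept columns, so that exactly $k_s-1$ columns are discarded; this makes the interlacing index land on $n_{s+1}=n_s+k_s$ while the effective column count drops to $n-n_{s+1}+1$, matching the statement.
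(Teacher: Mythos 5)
Your proof is correct, and its skeleton is the same as the paper's: both arguments chain (i) $\sigma_{\min}(T)=\sigma_{\min}(C)\ge\sqrt{\gamma+\tau^2-1}\,\|\mathbf c_{1}\|$ from Lemma~\ref{lem:tau_gamma}, (ii) the bound $\|M\|_2\le\sqrt{p}\,\max_j\|M_{\cdot j}\|$ of \eqref{eq:bound_largest_sv} with $p=n-n_{s+1}+1$, and (iii) interlacing with the zero block retained so that only rows are charged, landing on $\sigma_{n_{s+1}}(A)$. The one genuine difference is the intermediate submatrix: the paper applies (ii)--(iii) to the post-update block $\left(\begin{smallmatrix} t_{k,k} & t_{k,k+1}\,\cdots\, t_{k,n-n_s}\\ \mathbf 0 & R_{22}^{(s+1)}\end{smallmatrix}\right)$, which forces it to track how column norms split under $\tilde Q^{(s)}$ via $\|\mathbf c_j\|^2=\|\mathbf r_j\|^2+\|\mathbf t_j\|^2$ and to bound $t_{k,j}^2\le\|\mathbf t_j\|^2$, whereas you apply them to the pre-update submatrix $(\mathbf c_{j_1}\mid\bar C)$, discarding exactly the $k_s-1$ non-pivot columns of $C$; this sidesteps the Pythagorean bookkeeping entirely while producing the identical constant. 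Your version is, if anything, slightly cleaner, and your closing diagnosis of where the $\sqrt{n-n_{s+1}+1}$ and the index $n_{s+1}$ come from is exactly the point of the paper's construction.
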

\begin{proof}
Consider following column partitions $T = (\mathbf{t}_1 \dots \mathbf{t}_k)$, $\bar{T} = (\mathbf{t}_{k+1} \dots \mathbf{t}_{n-n_s})$, $R_{22}^{(s+1)} = (\mathbf{r}_{k+1}\dots \mathbf{r}_{n-n_s})$, and set $\mathbf{r}_j = \mathbf{0}$, for $1\leq j \leq k$. Moreover, let $T=\left\{t_{i,j}\right\}$, with ${1\leq i \leq j \leq k}$, and $\bar{T}=\left\{t_{i,j}\right\}$ with ${1\leq i \leq k, 1 \leq j \leq n-n_s}$. 
First, notice that by eq. \eqref{eq:interlacing2} we have
\begin{equation*}
     \left\Vert \begin{array}{cc} t_{k,k} & t_{k,k+1}, \dots,t_{k,n-n_s} \\ \mathbf{0} & R_{22}^{(s+1)} \end{array} \right\Vert \geq \sigma_{n_{s+1}}(A).
\end{equation*}
From eq. \eqref{eq:bound_largest_sv}, we have
\begin{equation*}
    \left\Vert \begin{array}{cc} t_{k,k} & t_{k,k+1}, \dots,t_{k,n-n_s} \\ \mathbf{0} & R_{22}^{(s+1)} \end{array} \right\Vert^2 
    \leq 
    (n-n_{s+1}+1) \max\left\{t_{k,k}^2, \max_{j\geq k+1}\left(\Vert \mathbf{r}_{j}\Vert^2 + t_{k,j}^2\right)\right\}.
\end{equation*}
Since $t_{k,j}^2 \leq \Vert \mathbf{t}_j \Vert^2$, for all $1 \leq j \leq n-n_s$, and computing the maximum on a larger set of indices we have
\begin{align*}
    \max\left\{t_{k,k}^2, \max_{j\geq k+1}\left(\Vert \mathbf{r}_{j}\Vert^2 + t_{k,j}^2\right)\right\}
    &\leq
    \max\left\{\Vert \mathbf{t}_k \Vert^2, \max_{j\geq k+1}\left(\Vert \mathbf{r}_{j}\Vert^2 + \Vert \mathbf{t}_j \Vert^2 \right)\right\} \\
    &\leq 
    \max_{j\geq 1}\left(\Vert \mathbf{r}_j\Vert^2 + \Vert \mathbf{t}_j\Vert^2\right).
\end{align*}
From equations (\ref{eq:QR_of_C}-\ref{eq:QR_of_C_applied_to_Cp}), for all $1 \leq j \leq n-n_s$, we have 
\begin{equation*}
    \Vert \mathbf{c}_j\Vert^2 = \Vert \mathbf{r}_j\Vert^2 + \Vert \mathbf{t}_j\Vert^2, 
\end{equation*}
and, finally, since $\Vert \mathbf{t}_1\Vert^2  = \Vert \mathbf{c}_1\Vert^2 = \max_j \Vert \mathbf{c}_j\Vert^2$ and by using Lemma \ref{lem:tau_gamma}, we get
\begin{equation*}
    \left\Vert \begin{array}{cc} t_{k,k} & t_{k,k+1}, \dots,t_{k,n-n_s} \\ \mathbf{0} & R_{22}^{(s+1)} \end{array} \right\Vert^2
    \leq (n-n_{s+1}+1) \Vert \mathbf{c}_1\Vert^2  \leq \frac{n-n_{s+1}+1}{{\gamma+ \tau^2 - 1 }} \sigma_{\min}^2(C).
\end{equation*}
We can conclude by noticing that $\sigma_{\min}(T)=\sigma_{\min}(C)$, since the two matrices differ by a left multiplication by an orthogonal matrix.
\qed\end{proof}

By the interlacing property of singular values, we have 
\begin{equation*}
    \bar{\sigma}^{(s+1)} \leq 
    \min \left\{ \bar{\sigma}^{(s)},\sigma_{\min}
    \left(\begin{array}{c}
         B  \\
         T 
    \end{array}\right) \right\},
\end{equation*}
thus the bounds on $\bar{\sigma}^{(s)}$ and $\sigma_{\min}(T)$ are, by themselves, not a sufficient condition.
Let us introduce the following result, which provides a bound of type \eqref{eq:RRproperty_sigmamin} for QRDM.
\begin{thm}
Let $R_{11}^{(s)}$ be the upper triangular factor of order $n_s$ computed by QRDM. We have
\begin{equation*}
    \bar{\sigma}^{(s+1)} \geq \sigma_{n_{s+1}}(A)  \frac{\bar{\sigma}^{(s)}}{\sigma_1(A)} \frac{1}{\sqrt{2(n-n_{s+1})n_{s+1}}}  \frac{\sqrt{\gamma+ \tau^2 - 1 }}{k^2 n_s}.
\end{equation*}
\end{thm}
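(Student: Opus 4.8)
The plan is to bound $\bar{\sigma}^{(s+1)} = \sigma_{\min}(R_{11}^{(s+1)})$ from below by controlling $\Vert (R_{11}^{(s+1)})^{-1}\Vert$, exploiting the block upper triangular form
\begin{equation*}
    R_{11}^{(s+1)} = \left(\begin{array}{cc} R_{11}^{(s)} & B \\ & T \end{array}\right)
\end{equation*}
exhibited in \eqref{eq:block_partition_sp1}, and then feeding in Lemma \ref{lem:bound_sigma_T} to replace $\sigma_{\min}(T)$ by a multiple of $\sigma_{n_{s+1}}(A)$. This parallels the argument behind the QRP estimate, with the block $T$ playing the role that a single pivot column plays there; note that the interlacing inequality $\bar{\sigma}^{(s+1)} \le \min\{\bar{\sigma}^{(s)}, \sigma_{\min}(B;T)\}$ is of no use here since it has the wrong direction, so the block inverse must be handled explicitly.

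First I would invert the block,
\begin{equation*}
    (R_{11}^{(s+1)})^{-1} = \left(\begin{array}{cc} (R_{11}^{(s)})^{-1} & -(R_{11}^{(s)})^{-1} B T^{-1} \\ & T^{-1}\end{array}\right),
\end{equation*}
and apply the smallest-singular-value estimate \eqref{eq:bound_smallest_sv} to $R_{11}^{(s+1)}$, which has order $n_{s+1}$:
\begin{equation*}
    \bar{\sigma}^{(s+1)} \geq \frac{1}{\sqrt{n_{s+1}}}\Big(\max_h \big\Vert \mathbf{e}_h^T (R_{11}^{(s+1)})^{-1}\big\Vert\Big)^{-1}.
\end{equation*}
The maximum is then split over the two row blocks: a row of the lower block contributes $\Vert \mathbf{e}_l^T T^{-1}\Vert \leq \Vert T^{-1}\Vert = 1/\sigma_{\min}(T)$, while a row of the upper block is $(\mathbf{e}_h^T(R_{11}^{(s)})^{-1},\ -\mathbf{e}_h^T(R_{11}^{(s)})^{-1}BT^{-1})$ and hence has norm at most $\Vert \mathbf{e}_h^T (R_{11}^{(s)})^{-1}\Vert(1+\Vert B T^{-1}\Vert) \leq (1/\bar{\sigma}^{(s)})(1+\Vert B\Vert\,\Vert T^{-1}\Vert)$, using $\Vert (R_{11}^{(s)})^{-1}\Vert = 1/\bar{\sigma}^{(s)}$ and submultiplicativity.

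It remains to bound the ingredients. From Lemma \ref{lem:bound_sigma_T}, $\sigma_{\min}(T) \geq \sqrt{\gamma+\tau^2-1}\,\sigma_{n_{s+1}}(A)/\sqrt{n-n_{s+1}+1}$; the columns of $B$ satisfy crude entrywise bounds in terms of $\sigma_1(A)$ (each column of $(B;C)$ is an orthogonal image of a column of $A$, and deleting rows and columns does not increase the largest singular value), so $\Vert B\Vert$, and more importantly the cross term $\Vert(R_{11}^{(s)})^{-1}BT^{-1}\Vert$, can be bounded after replacing operator norms by Frobenius norms — this is exactly the step that produces the dimensional factor $k^2 n_s$; finally $\bar{\sigma}^{(s)}\le\sigma_1(A)$ and $\sigma_{\min}(T)\le\sigma_1(A)$ by interlacing (\eqref{eq:interlacing1}--\eqref{eq:interlacing2}). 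Substituting all of this, bounding $\sqrt{n-n_{s+1}+1}\le\sqrt{2(n-n_{s+1})}$, collecting the factors $\sqrt{n_{s+1}}$ (from \eqref{eq:bound_smallest_sv}), $\sqrt{2(n-n_{s+1})}$ (from Lemma \ref{lem:bound_sigma_T}), and the $k$- and $n_s$-dependent constants, and finally solving for $\bar{\sigma}^{(s+1)}$, yields the stated inequality.

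The hard part is not conceptual but the careful — and deliberately generous — bookkeeping of the cross term $(R_{11}^{(s)})^{-1}BT^{-1}$ inside the inverse: it is the passage to Frobenius norms and dimension-weighted entrywise bounds on $B$ and $T^{-1}$ that inflates the constant from the $\mathcal{O}(1)$ one might hope for to $k^2 n_s$. A secondary point needing attention is index consistency: $\bar{\sigma}^{(s)} = \sigma_{n_s}(R_{11}^{(s)})$ whereas the relevant singular value of $A$ is $\sigma_{n_{s+1}}(A)$, and Lemma \ref{lem:bound_sigma_T} already absorbs the interlacing relations internally. Once the cross-term estimate is fixed, the conclusion is a one-line substitution.
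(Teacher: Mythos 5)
Your proposal is correct and follows essentially the same route as the paper's proof: block inversion of $R_{11}^{(s+1)}$, row-wise application of \eqref{eq:bound_smallest_sv}, entrywise (max-norm) bookkeeping of the cross term $R^{-1}BT^{-1}$ to produce the factor $k^2 n_s$, interlacing to bound $\Vert B\Vert$ and $\Vert T\Vert$ by $\sigma_1(A)$, and Lemma \ref{lem:bound_sigma_T} for $\sigma_{\min}(T)$. Your parenthetical observation is also on point: the direct submultiplicative bound $\Vert\mathbf{e}_h^T R^{-1}BT^{-1}\Vert\le\Vert\mathbf{e}_h^TR^{-1}\Vert\,\Vert B\Vert\,\Vert T^{-1}\Vert$ that you write down first would avoid the $k^2 n_s$ inflation altogether, so that factor is indeed an artifact of the generous entrywise estimate rather than a necessity.
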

\begin{proof}
Let us drop the subscript and the superscript on the inverse of $R_{11}^{(s)}$ and its inverse $\left(R_{11}^{(s)}\right)^{-1}$, which will be denoted as $R$ and $R^{-1}$ respectively.
Then, the inverse of matrix $R_{11}^{(s+1)}$ is given by
\begin{equation*}
   \left(R_{11}^{(s+1)}\right)^{-1} = \left(\begin{array}{cc}
        R^{-1} & -R^{-1}BT^{-1} \\
         & T^{-1} 
    \end{array} \right).
\end{equation*}
Let us introduce the following partitions into rows
\begin{equation*}
  F = R^{-1}BT^{-1} = \left(\begin{array}{c}\mathbf{f}_1^T\\ \vdots\\ \mathbf{f}_{n_s}^T \end{array}\right),\quad
  R^{-1} = \left(\begin{array}{c}\mathbf{g}_1^T\\ \vdots\\ \mathbf{g}_{n_s}^T \end{array}\right), \quad
  T^{-1} = \left(\begin{array}{c}\mathbf{h}_1^T\\ \vdots\\ \mathbf{h}_{k}^T \end{array}\right).
\end{equation*}
The idea is to use eq. \eqref{eq:bound_smallest_sv}, that is
\begin{equation*}
    \bar{\sigma}^{(s+1)} \leq \min_h \left\Vert \mathbf{e}_h^T\left( \begin{array}{cc}
        R^{-1} & F \\
         & T^{-1} 
    \end{array} \right) \right\Vert^{-1} \leq \sqrt{n_{s+1}}\sigma_{\min}\bar{\sigma}^{(s+1)} ,
\end{equation*}
to estimate the minimum singular value up to a factor $\sqrt{n_{s+1}}$.
For $1 \leq h \leq n_{s+1} $ we have
\begin{equation*}
    \left\Vert \mathbf{e}_h^T\left( \begin{array}{cc}
        R^{-1} & F \\
         & T^{-1} 
    \end{array} \right) \right\Vert^2 = 
    \begin{cases}
    \Vert \mathbf{g}_h\Vert^2 + \Vert \mathbf{f}_h\Vert^2, & h\leq n_s,\\
    \Vert \mathbf{h}_{h-n_s}\Vert^2, & h> n_s.
    \end{cases}
\end{equation*}
We can bound $ \Vert \mathbf{g}_h\Vert $ using  eq. \eqref{eq:bound_smallest_sv} again, which gives
\begin{equation*}
 \bar{\sigma}^{(s)} \leq \min_h \left( \left\Vert \mathbf{g}_h \right\Vert^{-1} \right) \leq \sqrt{n_s} \bar{\sigma}^{(s)} .
\end{equation*}
In particular, for every $1\leq h\leq n_s$, we get
\begin{equation*}
    \bar{\sigma}^{(s)}  \leq \min_h \left( \left\Vert \mathbf{g}_h \right\Vert^{-1} \right) \leq \left\Vert \mathbf{g}_h \right\Vert^{-1},
\end{equation*}
and thus we have
\begin{equation*}
      \left\Vert \mathbf{g}_h \right\Vert \leq = \frac{1}{\bar{\sigma}^{(s)} } = \frac{1}{\sigma_{\min}(R)} = \sigma_{\max}(R^{-1}) = \Vert R^{-1} \Vert.
\end{equation*}
Similarly, we can bound $ \Vert\mathbf{h}_{h-n_s}\Vert $ by $\Vert T^{-1} \Vert$.
Let us now concentrate on bounding $ \Vert \mathbf{f}_h\Vert$. We have
\begin{equation*}
\begin{aligned}
    \Vert \mathbf{f}_h\Vert_2 &\leq  \Vert \mathbf{f}_h\Vert_1 = \sum_{l=1}^{k} \left| f_{hl} \right| = \sum_{l=1}^{k} \left| \sum_{i=1}^{k} [R^{-1}B]_{hi}[T^{-1}]_{il} \right|  \\
     &= \sum_{l=1}^{k} \left| \sum_{i=1}^{k} \sum_{j=1}^{n_s} [R^{-1}]_{hj}[B]_{ji}[T^{-1}]_{il} \right|  \\
     &\leq \sum_{l=1}^{k} \sum_{i=1}^{k} \sum_{j=1}^{n_s} \left| [R^{-1}]_{hj} \right|\ \left| [B]_{ji} \right|\ \left| [T^{-1}]_{il} \right| \\
     &\leq  \sum_{l=1}^{k} \sum_{i=1}^{k} \sum_{j=1}^{n_s}  \left\Vert R^{-1} \right\Vert_{\max}  \left\Vert B \right\Vert_{\max}  \left\Vert T^{-1} \right\Vert_{\max} \\
     &= k^2 n_s \left\Vert R^{-1} \right\Vert_{\max}  \left\Vert B \right\Vert_{\max}  \left\Vert T^{-1} \right\Vert_{\max}  \\
     & \leq k^2 n_s \left\Vert R^{-1} \right\Vert  \left\Vert B \right\Vert  \left\Vert T^{-1} \right\Vert \\
     &= \frac{k^2 n_s}{\bar{\sigma}^{(s)}}  \left\Vert B \right\Vert  \left\Vert T^{-1} \right\Vert,
  \end{aligned}
\end{equation*}
where we use the following facts $\Vert \mathbf{x}\Vert_2 \leq \Vert \mathbf{x}\Vert_1$, and $\left\Vert A \right\Vert_{\max} \leq \left\Vert A \right\Vert$, see \eqref{eq:bound_max_norm}.
Moreover, we can write
\begin{equation*}
\begin{aligned}
&\Vert \mathbf{g}_h\Vert^2 + \Vert \mathbf{f}_h\Vert^2 \leq \frac{1}{(\bar{\sigma}^{(s)})^2} + \frac{k^4 n_s^2}{(\bar{\sigma}^{(s)})^2} \left\Vert B \right\Vert^2  \left\Vert T^{-1} \right\Vert^2  \\
&= \frac{\sigma_{\min}^2(T) + k^4 n_s^2 \left\Vert B \right\Vert^2 }{(\bar{\sigma}^{(s)}\sigma_{\min}(T))^2} \leq \frac{\Vert T\Vert^2 + k^4 n_s^2 \left\Vert B \right\Vert^2 }{(\bar{\sigma}^{(s)}\sigma_{\min}(T))^2} \\
& \leq \frac{ 2k^4 n_s^2 }{(\bar{\sigma}^{(s)}\sigma_{\min}(T))^2} \max\left\{ \Vert T\Vert^2, \Vert B\Vert^2\right\} \leq \frac{2 k^4 n_s^2 }{(\bar{\sigma}^{(s)}\sigma_{\min}(T))^2} \Vert A\Vert^2,
  \end{aligned}
\end{equation*}
where, in the last inequality, we used the interlacing property and the invariance under matrix transposition of the singular values. In fact
\begin{equation*}
    \sigma_{1}(A) \geq \sigma_{1}\left(\begin{array}{c}
         B \\
         T 
    \end{array} \right) = \sigma_{1}\left(B^T\ T^T \right) \geq \max\left\{\sigma_{1}(B), \sigma_{1}(T)\right\}.
\end{equation*}
Hence, we get
\begin{equation*}
\begin{aligned}
    \frac{1}{\sqrt{\Vert \mathbf{g}_h\Vert^2 + \Vert \mathbf{f}_h\Vert^2}} \geq \frac{\bar{\sigma}^{(s)}\sigma_{\min}(T)}{\sqrt{2} k^2 n_s \sigma_1(A)}.
\end{aligned}
\end{equation*}
If $\bar{\sigma}^{(s)}$ is a good approximation of $\sigma_{n_s}(A)$, we can suppose that $\bar{\sigma}^{(s)}/\sigma_{n_s}(A) \approx 1$, and we can write
\begin{equation*}
\begin{aligned}
    \sqrt{n_{s+1}}  \bar{\sigma}^{(s+1)} &\geq \min \left\{ \min_h \Vert \mathbf{h}_h\Vert^{-1}, \min_h \frac{1}{\sqrt{\Vert \mathbf{g}_h\Vert^2 + \Vert \mathbf{f}_h\Vert^2}} \right\} \\
    &\geq \min \left\{ 1, \frac{\bar{\sigma}^{(s)}}{\sqrt{2} k^2 n_s \sigma_1(A)} \right\}\sigma_{\min}(T) \\
    & = \frac{\bar{\sigma}^{(s)}}{\sqrt{2} k^2 n_s \sigma_1(A)} \sigma_{\min}(T).
\end{aligned}
\end{equation*}
Finally, using Lemma \ref{lem:bound_sigma_T}, we get
\begin{equation*}
    \bar{\sigma}^{(s+1)} \geq \sigma_{n_{s+1}}(A)  \frac{\bar{\sigma}^{(s)}}{\sigma_1(A)} \frac{1}{\sqrt{2(n-n_{s+1})n_{s+1}}}  \frac{\sqrt{\gamma+ \tau^2 - 1 }}{k^2 n_s},
\end{equation*}
which is the desired bound.
\qed\end{proof}

This shows that even if the leading $n_s$ columns have been carefully selected, so that $\bar{\sigma}^{(s)}$ is an accurate approximation of $\sigma_{n_s}(A)$, there could be a potentially dramatic loss of accuracy in the estimation of the successive block of singular values, namely $\sigma_{n_s+1}(A),\dots,\sigma_{n_{s+1}}(A)$, just like for the standard column pivoting. 
In fact, it is well known that failure of QRP algorithm may occur (one such example is the Kahan matrix \cite{Kah66}), as well as for other greedy algorithms, but it is very unlikely in practice.

\subsection{Termination criteria}
In principle, both QRP and QRDM reveal the rank of a matrix. In finite arithmetic we have
\begin{equation}
    \left(\begin{array}{cc}
        \hat{R}_{11}^{(s)} & \hat{R}_{12}^{(s)} \\
         & \hat{R}_{22}^{(s)} 
    \end{array} \right), \label{eq:RRQR_step_s_rounded}
\end{equation}
where $\hat{R}^{(s)}_{ij}$ is the block $R^{(s)}_{ij}$ computed in finite representation, for $i=1,2,j=2$. If the block $ \hat{R}^{(s)}_{22} $ is small in norm, then it is reasonable to say that the matrix $A$ has rank $n_s$, where $n_s$ is the order of the upper triangular block $\hat{R}_{11}^{(s)}$. \citet{golub-libro} propose the following termination criterion
\begin{equation}
    \left\Vert  \hat{R}^{(s)}_{22}  \right\Vert \leq \epsilon_1 \left\Vert A  \right\Vert, \label{eq:stop_crit_golub}
\end{equation}
where $\epsilon_1$ is a parameter depending on the machine precision $\epsilon$. Notice that even if an $R_{22}$ block with small norm implies numerical rank-deficiency, the converse is not true in general: an example is the Kahan matrix \cite{Kah66}. Since the $2$-norm is not directly available, we make use of the inequalities \eqref{eq:bound_largest_sv}. Let us write the column partition $ \hat{R}^{(s)}_{22} = (\hat{\mathbf{c}}_1\ \dots \ \hat{\mathbf{c}}_{n-n_s})$. We have 
\begin{equation*}
    \left\Vert  \hat{R}^{(s)}_{22}  \right\Vert \leq \sqrt{n-n_s} \max_i  \left\Vert  \hat{\mathbf{c}}_i \right\Vert, \quad
    \max_i  \left\Vert  \mathbf{a}_i \right\Vert \leq  \left\Vert  A  \right\Vert. 
\end{equation*}
Therefore, the stopping criterion \eqref{eq:stop_crit_golub} holds if 
\begin{equation}
    \sqrt{n-n_s} \max_i  \left\Vert  \hat{\mathbf{c}}_i \right\Vert \leq \epsilon_1 \max_i  \left\Vert  \mathbf{a}_i \right\Vert.
    \label{eq:practical stopping criterion}
\end{equation}
Notice that the contrary does not hold. In Section \ref{sec:numerical_experiments} we test this practical stopping criterion \eqref{eq:practical stopping criterion} and discuss the following two choices:
\begin{align}
    \epsilon_1 &=\epsilon \ n, \label{eq:practical stopping criterion_choice1}\\
    \epsilon_1 &=\epsilon \sqrt{n}. \label{eq:practical stopping criterion_choice2}
\end{align}

\section{Implementation issues}
\label{sec_implementation_issues}

In this section we discuss implementation aspects of the QRDM procedure. In particular, we address the following issues
\begin{enumerate}
    \item the practical computation of the candidate set ${I}$ defined in \eqref{eq:def_candidate_set};
    \item the efficient computation of the cosine matrix $\Theta$ defined in \eqref{eq:def_cosine_matrix};
    \item make a less restrictive choice of the parameters $\tau$ and $\delta$ \eqref{eq:delta_max}, without affecting the robustness of the computed $QR$; 
    \item the structure of the pivoting, which has a significant impact on the cost of the algorithm.
 \end{enumerate}
Let us first focus on some details of the implementation of the Deviation Maximization presented in Algorithm \ref{alg:DM}.

The candidate set ${I}$ can be computed with a fast sorting algorithm, e.g. quicksort, applied to the array of partial column norms. 

The most expensive operation in Algorithm \ref{alg:DM} is the computation of the cosine matrix in step \ref{alg:DM_step_cosinematrix}. If we write the matrix $A$ by columns $A = ( \mathbf{c}_1\ \dots \ \mathbf{c}_n)$, then the cosine matrix $\Theta$ has entries $\theta_{ij} = \mathbf{c}_i^T \mathbf{c}_j \Vert \mathbf{c}_i \Vert^{-1} \Vert \mathbf{c}_j \Vert^{-1}$, for $i,j \in {I}$. Therefore we have
\begin{equation*}
    \Theta = D^{-1} [A]^T_{ {I}} [A]_{ {I}} D^{-1},
\end{equation*}
where $D = \diag(\Vert \mathbf{c}_j \Vert)$, with $j \in {I}$.
The matrix $\Theta$ is symmetric, thus we only need its upper (lower) triangular part. This can be computed in two ways
\begin{enumerate}[(i)]
    \item we first form the product $U_1 = [A]_{ {I}} D^{-1}$, and then we compute $\Theta = U_1^T U_1$;
    \item we first form the product $U_2 = [A]^T_{ {I}} [A]_{ {I}}$, and then we compute $\Theta = D^{-1} U_2 D^{-1}$.
\end{enumerate}
The former approach requires $m\times n$ additional memory to store $U_1$ and it requires $m^2k_{\max}^2$ flops to compute $U_1$ and $(2m-1)k_{\max}(k_{\max}-1)/2$ flops for the upper triangular part of $ U_1^T U_1$, while the latter does not require additional memory since the matrix $U_2$ can be stored in the same memory space used for the cosine matrix $\Theta$, and it requires $(2m-1)k_{\max}(k_{\max}-1)/2$ flops the upper triangular part of $U_2$ and $k_{\max}(k_{\max}-1)$ flops for the upper triangular part of $D^{-1} U_2 D^{-1}$. Therefore, we recommend the second approach, even if it requires to write an \emph{ad hoc} low level routine which is not implemented in the BLAS library. 

In order to limit the cost and the amount of additional memory of Algorithm \ref{alg:DM}, we propose a restricted version of the Deviation Maximization pivoting. If the candidate is given by ${I} = \{j_l: l = 1,\dots , {k_{\max}}\}$, we limit its cardinality to be smaller or equal to a machine dependent parameter $k_{DM}$, that is
\begin{equation}
    {I} = \{j_l : l = 1,\dots , \min({k_{\max}},{k_{DM}})\}.
    \label{eq:candidate_set_limited_by_kDM}
\end{equation}
We refer to the value $k_{DM}$ as block size, and we discuss its value in terms of achieved performances in Section \ref{sec:numerical_experiments}.

Notice that the Deviation Maximization requires the inversion of the diagonal matrix $D = \diag(\Vert \mathbf{c}_j \Vert)$, with $j\in {I}$ This operation may cause numerical instabilities when $\Vert \mathbf{c}_j \Vert $ is close to the working precision $\epsilon$, which is likely to be true when the decomposition has revealed the numerical rank, i.e. $n_s \geq r$. In such a case, the computation of $D^{-1}$ would be totally inaccurate. Hence, we require 
\begin{equation}
    \max_i \left\Vert  \mathbf{c}_i \right\Vert > \mathcal{O}(\epsilon),
    \label{eq:column_norm_bound_DM}
\end{equation}
in order to carry out the Deviation Maximization procedure. 

We now describe the most crucial aspects of a practical implementation of the rank-revealing QRDM presented in Algorithm \ref{alg:QRDM}. 
First, the Deviation Maximization block pivoting cannot be carried out when the maximum partial column norm of the trailing matrix is of the order of the working precision $\epsilon$, that is when \eqref{eq:column_norm_bound_DM} holds. In such case, a practical implementation switches from the Deviation Maximization block pivoting to another one, e.g. the standard column pivoting.
Let us now detail how to choose $\tau$ and $\delta$.  
In practice, as we detail in Section \ref{sec:numerical_experiments}, it is desirable to relax the requirements given by Lemma \ref{lem:bound_sigma_T} on the choice of the values for $\tau$ and $\delta$, since their theoretical bounds turn out to be very demanding with a consequent limitation of the performance of the overall factorization. On the other side, if we settle for any choice of $ \tau, \delta$ with $1\geq \tau, \delta \geq 0$, then the Deviation Maximization may identify a set of numerically linear dependent columns. In order to overcome this issue, we incorporate an additional check in the Householder procedure at step \ref{alg:step_QRDM_WYrepr} of Algorithm \ref{alg:QRDM}. 
The modified procedure is presented in Algorithm \ref{alg:QRDM_mod}.

\begin{algorithm}[h]
    \caption{Modified QR with Deviation Maximization $QRDM2(A, \tau, \gamma, \varepsilon)$}
    \label{alg:QRDM_mod}
    \begin{algorithmic}[1]
        \State set $n_s=0$ and initialize the vector $\mathbf{u}$ of squared norms 
        \While{$n_s<n$}
        \State $\left\{j_1,\dots,j_{k_s}\right\} = DM([A]_{n_s+1:m,n_s:n}, [\mathbf{u}]_{n_s+1:n},\tau, \gamma)$
        \State move columns $ \left\{j_1,\dots,j_{k_s}\right\}$ to the leading positions of $A$ and $\mathbf{u}$
        \For{$l=1,\dots, k_s$}
        \State compute the Householder reflector $\mathbf{v}^{(n_s+l)}$ w.r.t. $[A]_{n_s+l:m,n_s+l}$
        \State update the remaining columns $[A]_{n_s+l:m,n_s+l+1:n_s+k_s} -= \mathbf{v}^{(n_s+l)}(\mathbf{v}^{(n_s+l)})^T\ [A]_{n_s+l:m,n_s+l+1:n_s+k_s}$
        \If{$l+1<k_s$ and $\Vert [A]_{n_s+l+1:m,n_s+l+1} \Vert < \varepsilon_s $}{ break} \label{alg:modified_houseQR_check}
        \EndIf
        \EndFor
        \State compute the compact WY representation of $\mathbf{v}^{(n_s+1)},\dots, \mathbf{v}^{(n_s+l)}$ 
        \State update the trailing matrix $[A]_{n_s+1:m,n_s+k_s+1:n} -= Y^{(s)}\ (W^{(s)})^T\ (Y^{(s)})^T\ [A]_{n_s+1:m,n_s+k_s+1:n}$
        \State update the partial column norms $\mathbf{u}$
        \State $n_s = n_s+l$
        \EndWhile
    \end{algorithmic}
\end{algorithm}

Recall that the columns chosen by the Deviation Maximization at the $s$-th algorithmic step satisfy 
\begin{equation}
  \Vert [A]_{n_s:m,n_s+j} \Vert \geq \tau\ \max_{i > n_s} \Vert [A]_{n_s:m,i} \Vert  =:  \varepsilon_s, \label{eq:lenght_check}
\end{equation}
for all $j \in {J}$. The prior check introduced at step \ref{alg:modified_houseQR_check} breaks the Householder procedure when a partial column norm $\Vert [A]_{n_s+l:m,n_s+l} \Vert$ becomes smaller than $\varepsilon_s$ defined above, in other words, if the $l$-th column is not sufficiently linearly independent from the subspace spanned by the first $l-1$ columns already processed. Different choices of $\varepsilon_s$ are possible, e.g. a small and constant threshold. However, numerical tests show that the choice \eqref{eq:lenght_check} works well in practice.
In this case, the Householder reduction to triangular form terminates with $l < k$ Householder reflectors, and the algorithm continues with the computation and the application of the compact WY representation of these $l$ reflectors. At the next iteration, the pivoting strategy moves the rejected column from the leading position, if necessary.

As we show in Section \ref{sec:numerical_experiments}, this break mechanism enables us to choose $\tau$ and $\delta$ rather simply and to obtain the best results in execution times.

Last, we discuss the structure of the permutations employed in the QR fectorization, which has a significant impact on the cost of the algorithm. The structure of the column exchanges determines the structure of $\Pi^{(s)}$ and hence the cost of the QR factorization update. Recall at the $s$-th algorithmic step we have to move columns of indices $j_1,\dots,j_{k_s}$ to the leading positions $n_s+1,\dots, n_s+k_s$. We prefer permutations consisting of a sequence of cyclic shifts, that is a cyclic permutation involving only two elements and fixing all the others. In this way, the application of $\Pi^{(s)}$ requires only $m$ additional memory slots, that is the memory needed to swap two columns. Obviously, the less columns to swap the less the work involved in memory communications. A strategy that can easily be implemented consists in swapping the $n_s+i$-th column with the $j_i$-th column, for $i = 1,\dots,k_s$.

\section{Numerical experiments} \label{sec:numerical_experiments}

In this section we discuss the numerical accuracy of QRDM and compare it with QP3 and the SVD decomposition. Particular importance is given to the values on the diagonal of the upper triangular factor $R$ of the RRQR factorization, which are compared with the singular values of the $R_{11}$ block and with the singular values of the input matrix $A$. We use the implementation of QP3 provided by LAPACK's \texttt{xgeqp3} routine. 
The tests are carried out on a subset of matrices from the San Jose State University Singular Matrix Database, which were used in other previous papers on the topic, see e.g. \cite{DGG15, GEi96}. We show results coming from two subsets of this collection, that we call:
\begin{itemize}
    \item ``small matrices", 261 numerically singular matrices with $m \leq 1024$, $n \leq 2048$, whose indices where obtained with the following Matlab pseudocode
    \begin{algorithmic}[1]
    \State ind = SJget;
    \State index = find ( index.ncols$>32$ \& ind.ncols$<=2048$ \& ind.nrows$<=1024$ );
    \State [\texttildelow, k] = sort ( ind.ncols (index) );
    \State index = index(k);
\end{algorithmic}
    \item ``big matrices", 247 numerically singular matrices  with $m > 1024$, $n > 2048$, whose indices where obtained with the following Matlab pseudocode
    \begin{algorithmic}[1]
    \State ind = SJget;
    \State index = find ( index.ncols$>32$ \& ind.ncols$>2048$ \& ind.nrows$>1024$ );
    \State [\texttildelow, k] = sort ( ind.ncols (index) );
    \State index = index(k(1:247));
\end{algorithmic}
\end{itemize}

For each matrix $A$, we denote by $\sigma_i$ the $i$-th singular value of $A$ computed with the LAPACK's \texttt{xgejsv} routine, and by $n_r$ the numerical rank computed with the option \texttt{JOBA='A'}: in this case, small singular values are comparable with round-off noise and the matrix is treated as numerically rank deficient. As the pivoting used in QRDM does not guarantee that the diagonal values of the factor $R$ are monotonically non-increasing in modulus, for each matrix we denote by $d_i$ the $i$-th largest value among the first $n_r$ diagonal entries considered with positive sign.  
The results provided by QP3 for the two collections are summarised in Figures \ref{fig:ratio_diag(R11)_over_sigma_i_QP3} and \ref{fig:optimality_of_R11_QP3}.
We show that the order of magnitude of the ratios $d_i/\sigma_i$ ranges from $10^{-1}$ to $10^{1}$ for the ``small matrices" (Fig. \ref{fig:ratio_diag(R11)_over_sigma_i_QP3_a}) and ``big matrices" collections  (Fig. \ref{fig:ratio_diag(R11)_over_sigma_i_QP3_b}), i.e. the positive diagonal value $d_i$ approximate the corresponding singular value $\sigma_i$ up to a factor $10$, for $i = 1,\leq,n_r$. 
Moreover, we compare $\sigma_i(R_{11})$, that is the $i$-th singular value of $R_{11} = [R]_{1:n_r,1:n_r}$ computed by LAPACK's \texttt{xgejsv}, with $\sigma_i$, namely the corresponding singular value of $A$, by taking into account the ratios  $\sigma_i(R_{11})/\sigma_i$ for the ``small matrices" (Fig. \ref{fig:optimality_of_R11_QP3_b}) and ``big matrices" collections  (Fig. \ref{fig:optimality_of_R11_QP3_b}). These results confirm that QP3 provides an approximation of the singular values up to a factor $10$.
\begin{figure}
    \centering
    \begin{subfigure}{.9\textwidth}
    \includegraphics[width=\textwidth]{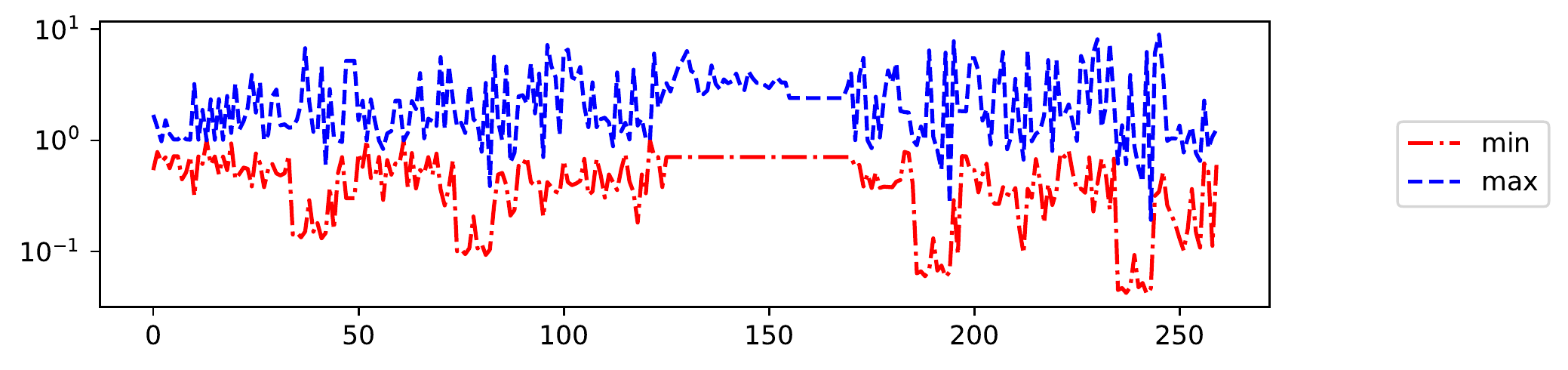}
    \caption{\label{fig:ratio_diag(R11)_over_sigma_i_QP3_a}}
    \end{subfigure}
    \begin{subfigure}{.9\textwidth}
    \includegraphics[width=\textwidth]{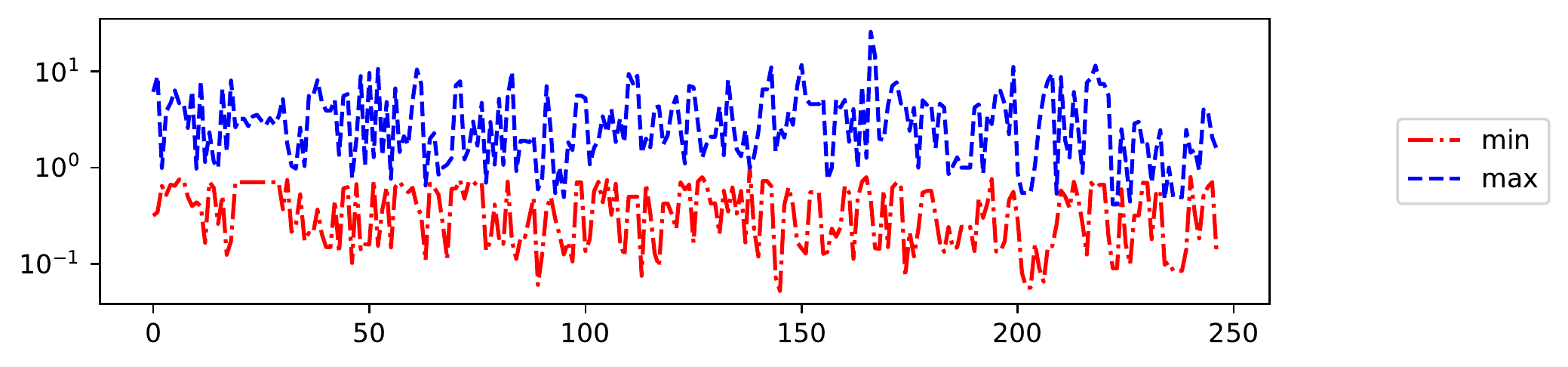}
    \caption{\label{fig:ratio_diag(R11)_over_sigma_i_QP3_b}}
    \end{subfigure}    
    \caption{Ratio $d_i/\sigma_i$, minimum (red) and maximum (blue) values for QP3 on the set ``small matrices" (a) and ``big matrices" (b). \label{fig:ratio_diag(R11)_over_sigma_i_QP3}}
\end{figure}
\begin{figure}
    \centering
    \begin{subfigure}{.9\textwidth}
    \includegraphics[width=\textwidth]{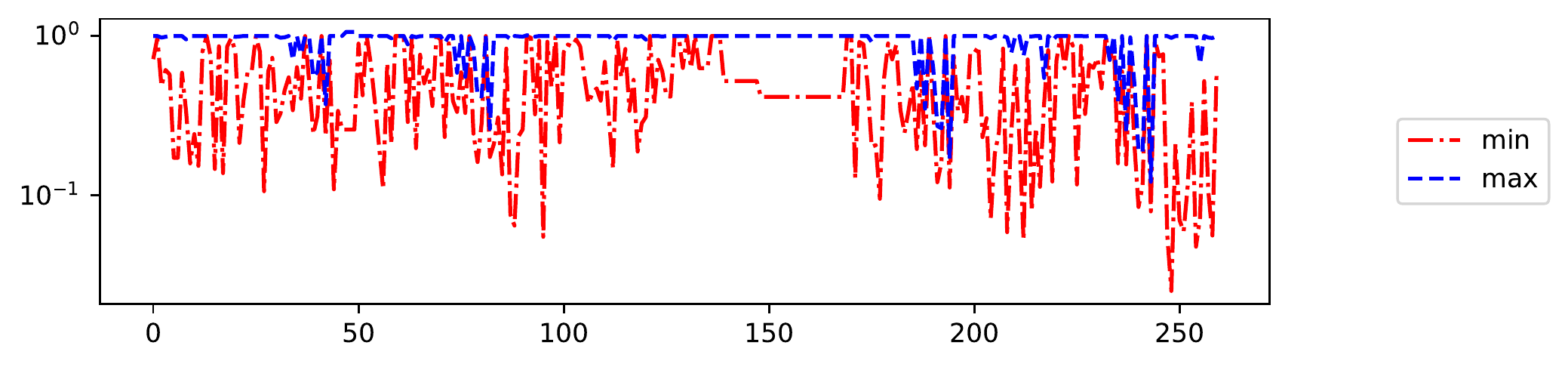}
    \caption{\label{fig:optimality_of_R11_QP3_a}}
    \end{subfigure}
    \begin{subfigure}{.9\textwidth}
    \includegraphics[width=\textwidth]{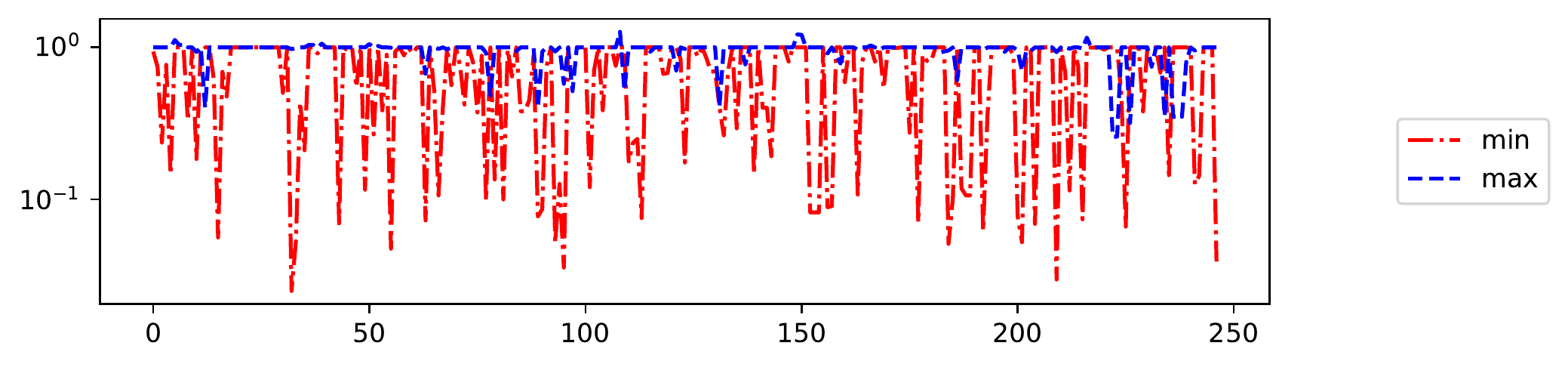}
    \caption{\label{fig:optimality_of_R11_QP3_b}}
    \end{subfigure}    
    \caption{Ratio $\sigma_i(R_{11})/\sigma_i$, minimum (red) and maximum (blue) values for QP3 on the set ``small matrices" (a) and ``big matrices" (b). \label{fig:optimality_of_R11_QP3}}
\end{figure}

Before providing similar results for QRDM, let us discuss the sensitivity of parameters $\tau$ and $\delta$
to the rank-revealing property \eqref{eq:RRproperty_sigmamin}. 
To this aim, we set a grid $\mathcal{G}$ of values $\mathcal{G}(i,j) = (\delta_i,\tau_j) = (0.05\ i,0.05\ j)$, with $i,j=0,\dots,20$, and we consider the $R$ factor obtained by QRDM for each matrix in the ``small matrices" collection and for each choice $\delta=\delta_i$, $\tau =\tau_j$. 

Figure \ref{fig:how_diag(R11)_approximates_singular_values} shows the order of magnitude of the minimum among all matrices of the minimum ratio $\min_{1\leq i\leq n_r}r_i/\sigma_i$  for each grid point of $\mathcal{G}$. We see that the positive diagonal elements provide an approximation up to a factor $10$ of the singular values for a wide range of parameters, corresponding to the light gray region: in practice, it is sufficient to avoid the extreme cases $\tau = 0$ and $\delta = 1$. Indeed, the presence of a wrong diagonal element in $R$, i.e. too small with respect to the corresponding singular value, is avoided thanks to the additional check proposed in \eqref{eq:lenght_check}, which possibly breaks the Householder triangularization.  
This suggests that any choice of $1 \geq \tau > 0$ and $1 >\delta \geq 0$ may lead to a rank-revealing QR decomposition. In this way, even a greedy setting of the algorithm is viable: if $\tau$ is almost zero and $\delta$ near one, the Deviation Maximization pivoting collects as much columns as possible. 
\begin{figure}
    \centering
    \begin{subfigure}{.5\textwidth}
    \includegraphics[width = \textwidth]{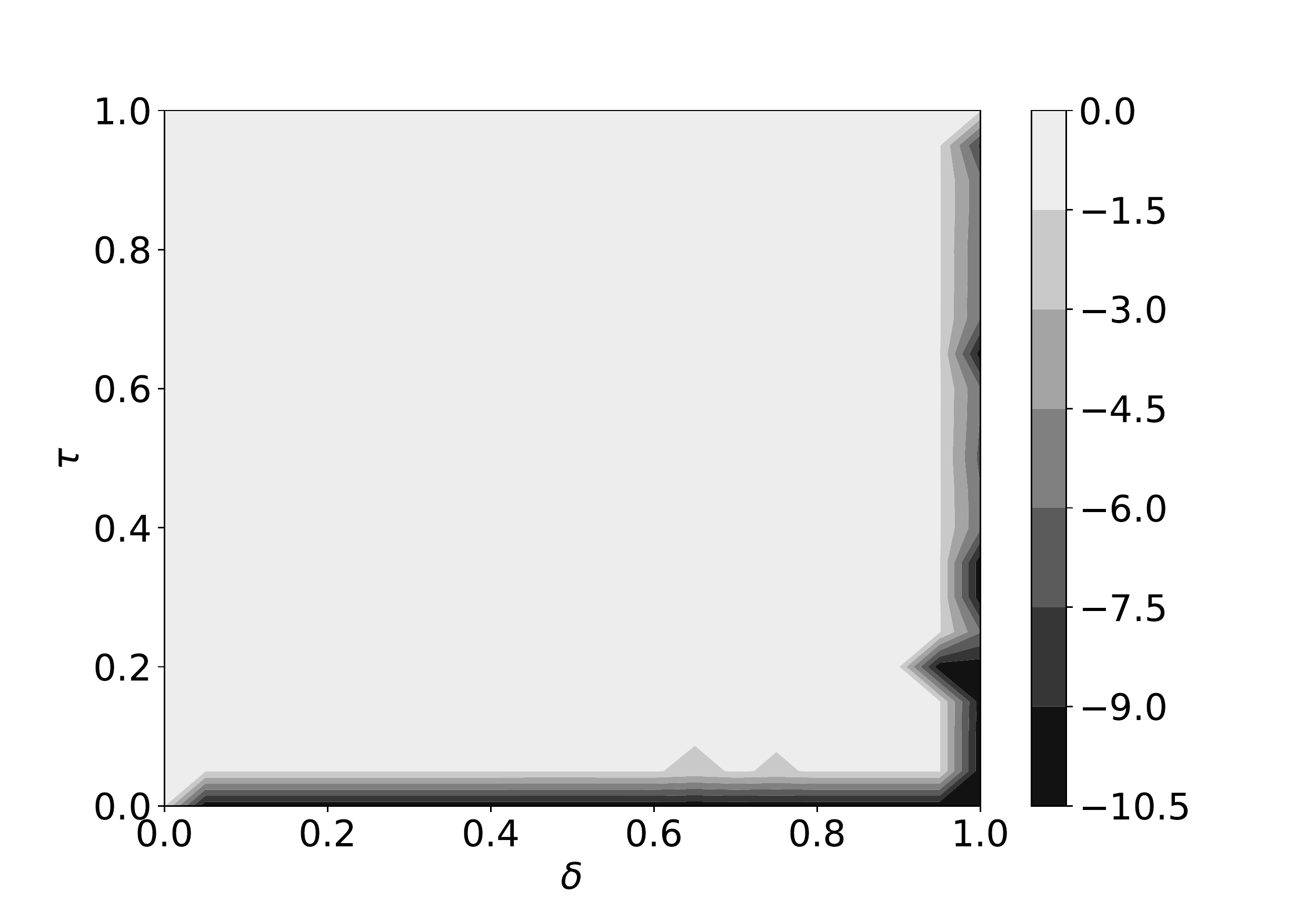}
    \caption{\label{fig:how_diag(R11)_approximates_singular_values}}
    \end{subfigure}
    \begin{subfigure}{.5\textwidth}
    \includegraphics[width = \textwidth]{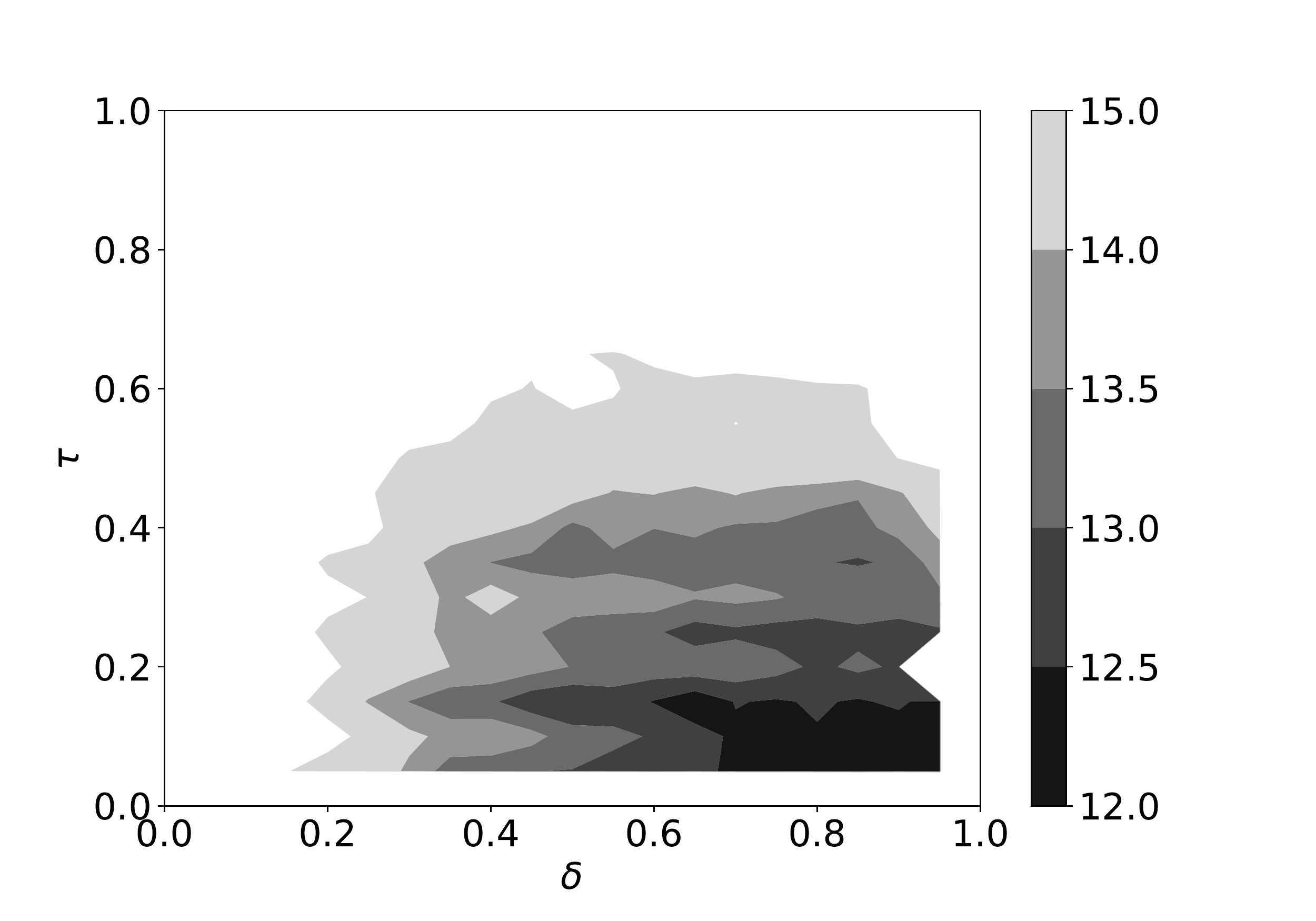}
    \caption{\label{fig:execution_times_for_a_grid_of_parameters}}
    \end{subfigure}
    \caption{Order of magnitude of the minimum $\min(r_i/\sigma_i)$ over all matrices (a) and cumulative execution times for QRDM (b) in function of the parameters $\tau$ and $\delta$ on the ``small matrices" data set.}
\end{figure}
Therefore, for an optimal parameters choice, we look at execution times. 

Figure \ref{fig:execution_times_for_a_grid_of_parameters} shows the cumulative execution times for all tests in the ``small matrices" collection for each grid point of $\mathcal{G}$. It is evident that best performances are obtained toward the right-bottom corner, in correspondence of the dark gray region, confirming that a greedy approach is convenient.
However, a too greedy choice of the parameters may yield less accurate ratios $d_i/\sigma_i$ in a very few cases. Hence, we suggest a safer choice: from now on we set 
$\tau = 0.15$ and $\delta = 0.9$, which are the optimal values for the validation set here considered. 

Figures \ref{fig:ratio_diag(R11)_over_sigma_i_QRDM} and \ref{fig:optimality_of_R11_QRDM} summarize the results provided by QRDM for the two collections with this parameters' choice.
In particular, the positive diagonal entries approximate the singular values up to a factor $10$ for all matrices in the ``small matrices" (Fig. \ref{fig:ratio_diag(R11)_over_sigma_i_QRDM_a}) and ``big matrices" (Fig. \ref{fig:ratio_diag(R11)_over_sigma_i_QRDM_b}) collections, while the singular values of $R_{11}$ provide an approximation up to a factor $10^2$ for few matrices in the ``small matrices" (Fig. \ref{fig:optimality_of_R11_QRDM_a}) and ``big matrices" (Fig. \ref{fig:optimality_of_R11_QRDM_b}) collections.

\begin{figure}
    \centering
    \begin{subfigure}{.9\textwidth}
    \includegraphics[width=\textwidth]{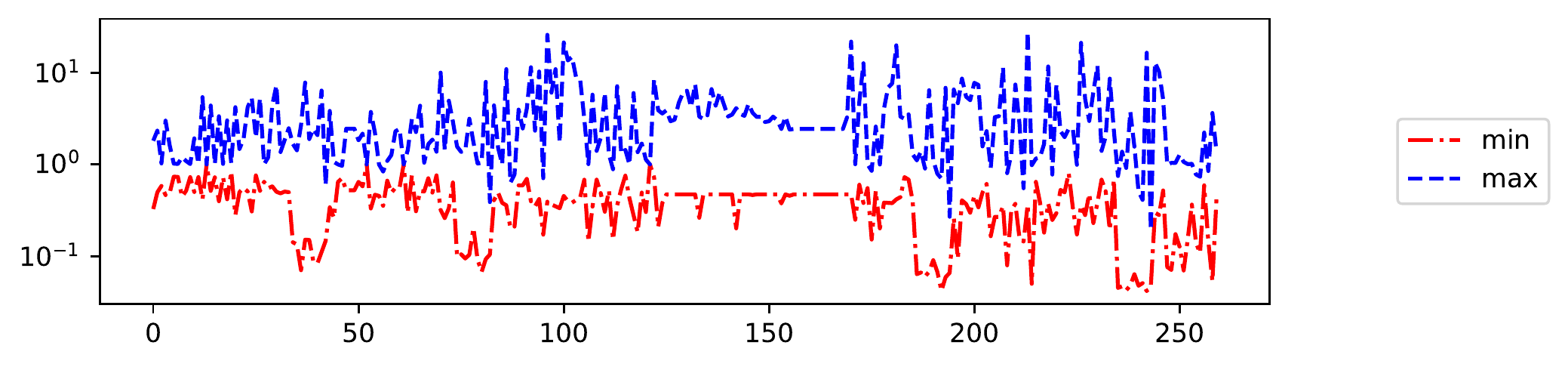}
    \caption{\label{fig:ratio_diag(R11)_over_sigma_i_QRDM_a}}
    \end{subfigure}
    \begin{subfigure}{.9\textwidth}
    \includegraphics[width=\textwidth]{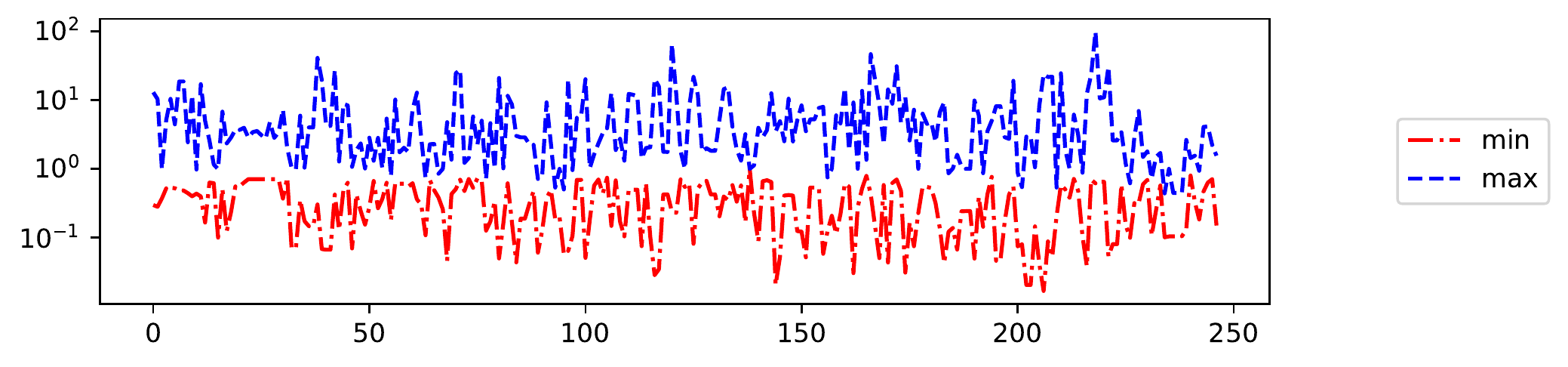}
    \caption{\label{fig:ratio_diag(R11)_over_sigma_i_QRDM_b}}
    \end{subfigure}    
    \caption{Ratio $d_i/\sigma_i$, minimum (red) and maximum (blue) values for QRDM on the set ``small matrices" (a) and ``big matrices" (b). \label{fig:ratio_diag(R11)_over_sigma_i_QRDM}}
\end{figure}

\begin{figure}
    \centering
    \begin{subfigure}{.9\textwidth}
    \includegraphics[width=\textwidth]{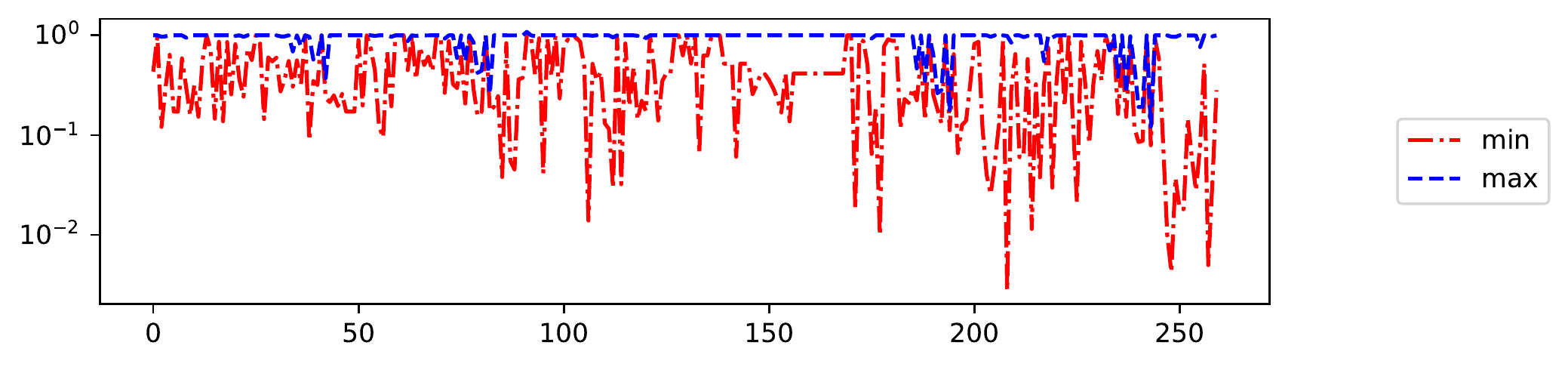}
    \caption{ \label{fig:optimality_of_R11_QRDM_a}}
    \end{subfigure}
    \begin{subfigure}{.9\textwidth}
    \includegraphics[width=\textwidth]{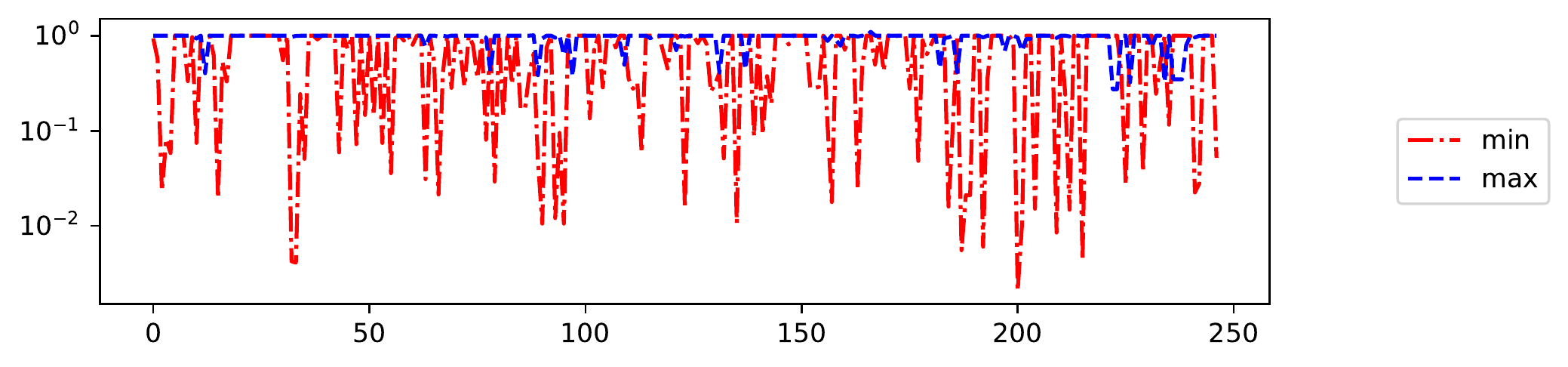}
    \caption{ \label{fig:optimality_of_R11_QRDM_b}}
    \end{subfigure}    
    \caption{Ratio $\sigma_i(R_{11})/\sigma_i$, minimum (red) and maximum (blue) values for QRDM on the set ``small matrices" (a) and ``big matrices" (b). \label{fig:optimality_of_R11_QRDM}}
\end{figure}

Let us now consider QRDM with a stopping criterion. We show the accuracy in the determination of the numerical rank, and the benefits in terms of execution times, when the matrix rank is much smaller than its number of columns. Recall that QRDM switches to the scalar pivoting when the partial column norms are not sufficiently large \eqref{eq:column_norm_bound_DM}, affecting the algorithm's performance. We consider the stopping criterion in \eqref{eq:practical stopping criterion}-\eqref{eq:practical stopping criterion_choice1}: the numerical rank is this case is given by the number of columns processed by QRDM and we denote it by $n_r^{QRDM}$.

Figure \ref{fig:err_rank} shows the ratio $\left( n_r^{QRDM}-n_r \right)/n_r$, where $n_r$ is the number of singular values larger than $  \epsilon n \Vert A \Vert = \epsilon n \sigma_1$, for all matrices in the ``small matrices" (Fig. \ref{fig:err_rank_a}) and ``big matrices" (Fig. \ref{fig:err_rank_a}) collections. The computed rank is accurate in nearly all cases. We also considered the stopping criterion in \eqref{eq:practical stopping criterion} with the choice \eqref{eq:practical stopping criterion_choice2}, which turned out to be less accurate.
Figure \ref{fig:err_rank_a} shows a case in which $n_r^{QRDM}$ overestimates $n_r$ with a relative error of about 45\%. This is a pathological case, however. The matrix involved shows a gap in the singular values distribution and, immediately after, a group of singular values just below the value $\epsilon n \Vert A \Vert$. The corresponding diagonal entries of the matrix $R$ obtained by QRDM show the same gap, but the stopping criterion \eqref{eq:practical stopping criterion}-\eqref{eq:practical stopping criterion_choice1}, which approaches the quantity $\epsilon n \Vert A \Vert$ from below, does not detect so accurately the crossing of the threshold. Anyway, since the diagonal entries of $R$ describe well the corresponding gap in the singular values, even in this case, the applications can correctly truncate the $R$ factor in post-processing and form the corresponding $Q$ factor.

\begin{figure}
    \centering
    \begin{subfigure}{.9\textwidth}
    \includegraphics[width=.9\textwidth]{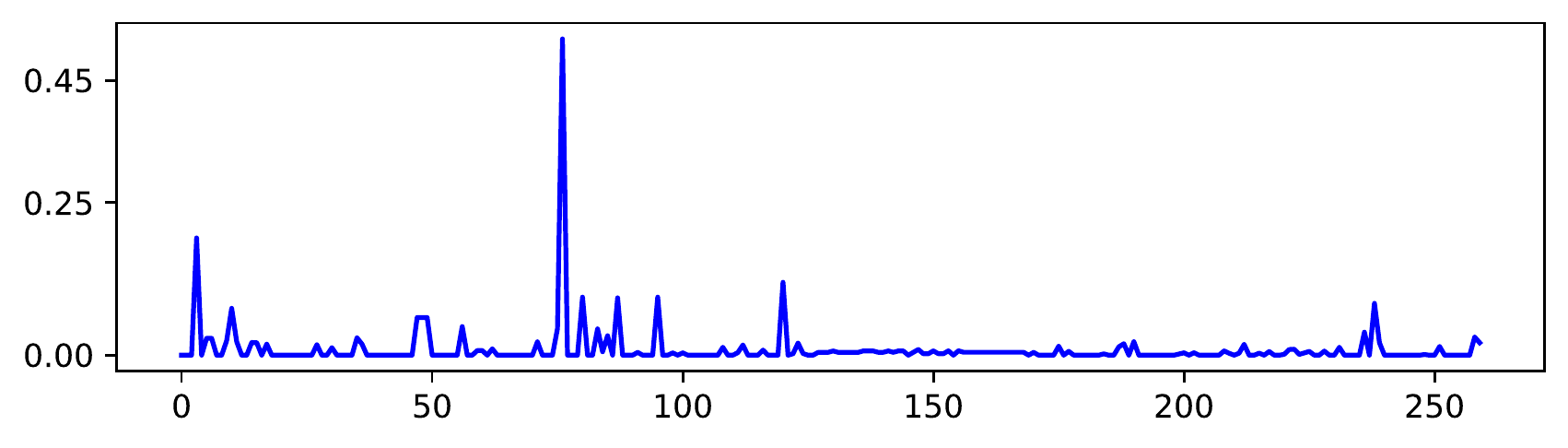}
    \caption{\label{fig:err_rank_a}}
    \end{subfigure}
    \begin{subfigure}{.9\textwidth}
    \includegraphics[width=.9\textwidth]{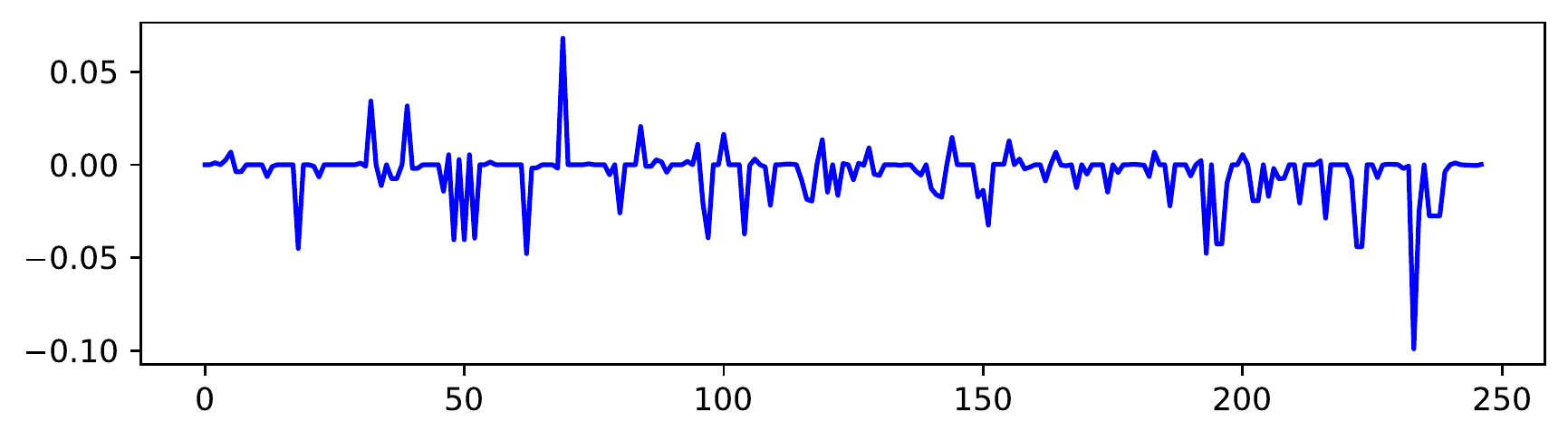}
    \caption{\label{fig:err_rank_b}}
    \end{subfigure}    
    \caption{relative error on the computed numerical rank for QRDM on the set ``small matrices" (a) and ``big matrices" (b). \label{fig:err_rank}}
\end{figure}

\begin{figure}
    \centering
    \includegraphics[width=.7\textwidth]{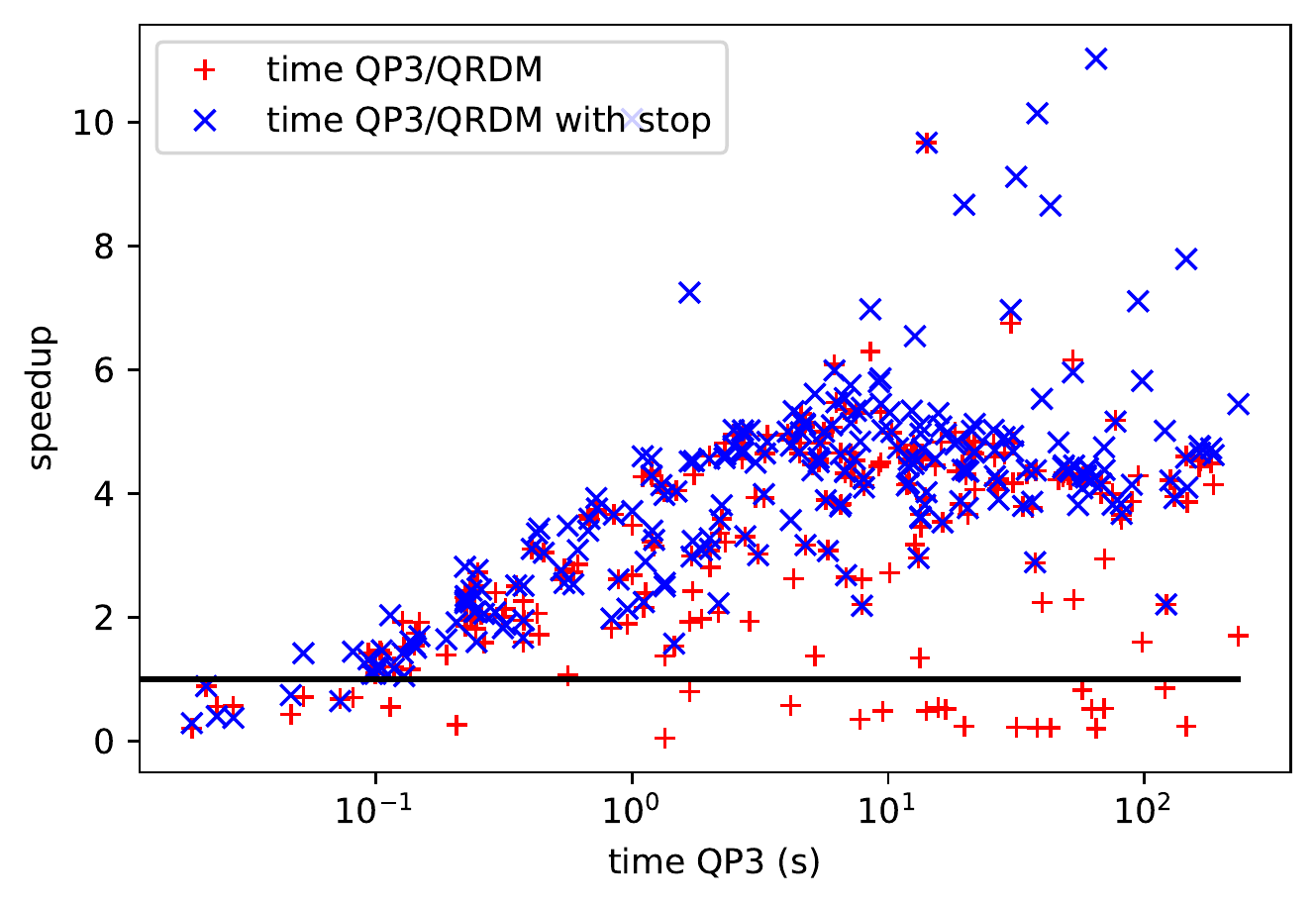}
    \caption{Execution time (s) of QP3 vs the ratio between the execution times of QP3 and QRDM (red) or QRDM with stop criterion (blue). \label{fig:speedup_big}}
\end{figure}

Finally, we compare the performances of QP3 with those of QRDM in terms of their execution times. Figure \ref{fig:speedup_big} shows the speedup of QRDM over QP3, namely the ratio $t_{QP3}/t_{QRDM}$, where $t_{QP3}$ and $t_{QRDM}$ are the execution times (in seconds) of QP3 and QRDM respectively, in function of $t_{QP3}$. We see that QRDM can achieve an average speedup of $4\times$ for medium/large matrices (corresponding to higher execution times). This result is a consequence of a higher BLAS-3 fraction of work provided by QRDM against QP3. 
Indeed, in order to compute the block Householder reflector, QP3 must update the partial column norms and identifies the next pivot by computing the maximum column norm, while QRDM selects a block of pivot columns at once. The former strategy relies on BLAS-2 operations, while the latter mostly on BLAS-3 operations.

Moreover, the stopping criterion gives an additional advantage for matrices whose numerical rank is much smaller than their number of columns. It may also be interesting to consider a comparison with an implementation of QP3 with the same stop criterion, but this is beyond the scope of the present work. 

Last, let us discuss briefly the effect of the block size $k_{DM}$ introduced to limit the cardinality of the candidate set in equation \eqref{eq:candidate_set_limited_by_kDM}. This parameter depends on the specific architecture, mainly in terms of cache-memory size, and typical values are $k_{DM}=32,64,128$. We observed that there is an optimal value of $k_{DM}$, in sense that it gives the smallest for a fixed experimental setting, and its computation is similar to the well-known BLAS block size computation practice, which is out of scope of this paper. For sake of clarity we say that on our personal computer we observed the optimal value $k_{DM}=64$, but other choices gave similar performances, e.g. $k_{DM}=32$.

\section{Conclusions} \label{sec:conclusions} 

In this work we have presented a new subset selection strategy we called  ``Deviation Maximization". Our method relies on cosine evaluation in order to select a subset of sufficiently linearly independent vectors. Despite this strategy is not sufficient by itself to identify a maximal subset of linearly independent columns for a given numerically rank deficient matrix, it can be adopted as a column pivoting strategy. In this work we introduced the Rank-Revealing QR factorization with Deviation Maximization pivoting, briefly called QRDM, and we compared it with the Rank-Revealing QR factorization with standard column pivoting, briefly QRP. We have provided a theoretical worst case bound on the smallest singular value for QRDM and we have shown it is similar to available results for QRP. Extensive numerical experiments confirmed that QRDM reveals the rank similarly to QRP and provides a good approximation of the singular values obtained with LAPACK's \texttt{xgejsv} routine. Moreover, we have shown that QRDM has better execution times than those of the BLAS-3 version of QRP implemented in LAPACK's \texttt{xgeqp3} routine in a large number of test cases.
The software implementation of QRDM used in this article is available at the URL: \url{https://github.com/mdessole/qrdm}. 

Our future work will focus on applying the Deviation Maximization as pivoting strategy to other problems which require column selection, e.g. constrained optimization problems, on which the authors successfully experimented a preliminary version in the context of active set methods for NonNegative Least Squares problems, see \cite{DMV20DRNA, DMV20Ma}. 

\bibliographystyle{abbrvnat}
\bibliography{references}

\appendix

\section{Auxiliary results} \label{bounds}
For sake of completeness, let us list in this appendix some useful facts we often used in this work. In order to help the reader, some results are stated together with the proof, others are simply reported and referenced.

\subsection{About Singular Values}
Let $A$ be an $m \times n$ matrix, and recall that the singular values of $A$ are the roots of the largest $\min(m,n)$ eigenvalues of $A^T A$ or $AA^T$. This is quite evident using the SVD decomposition $A=U\Sigma V^T$, where $U$ and $V$ are unitary matrices of order $m$ and $n$ respectively, and $\Sigma$ is an $m\times n$ pseudo-diagonal matrix (its extra-diagonal elements are null). Since $A^TA = V \Sigma^T \Sigma V^T$ and $AA^T = U \Sigma \Sigma^T U^T$, where $ \Sigma^T \Sigma$ and $ \Sigma \Sigma^T$ are diagonal matrices of order $n$ and $m$ respectively, but they clearly share the same diagonal elements up to index $\min(m,n)$. 
For any orthogonal matrix $Q$ of order $m$, we have
\begin{equation}
    A^TA = A^T Q^T QA = (QA)^TQA,\label{eq:sv_invariance_leftmult}
\end{equation}
therefore the singular values of $A$ and those of $QA$ are equal. On the other hand, if $Q$ is an orthogonal matrix of order $n$, we have
\begin{equation}
    AA^T = A QQ^T A = AQ(AQ)^T.\label{eq:sv_invariance_rightmult}
\end{equation}
This holds in particular for any permutation matrix $\Pi$, hence column or row permutations do not change the singular values of a matrix.
We also have
\begin{equation}
  \left( A^T\ \mathbb{O}^T \right) \left( \begin{array}{c} A \\ \mathbb{O} \end{array} \right) =  \left( \mathbb{O}^T\ A^T \right) \left( \begin{array}{c} \mathbb{O} \\ A \end{array} \right) =  A^TA +  \mathbb{O}^T \mathbb{O} = A^TA, \label{eq:sv_invariance_zeroblock}
\end{equation}
hence the singular values of a matrix do not change if we add a null block of rows or columns to a matrix $A$.

Let us now list and prove some inequalities involving the $2$-norm of a matrix $A =(\mathbf{a}_1 \dots \mathbf{a}_n) $.
\begin{lem}
For any matrix $A$ we have
\begin{equation}
     \max_{i} \Vert \mathbf{a}_i \Vert_2 \leq \Vert A \Vert_2 \leq \sqrt{n} \max_{i} \Vert \mathbf{a}_i \Vert_2. \label{eq:bound_largest_sv}
\end{equation}
\end{lem}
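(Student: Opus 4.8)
The plan is to prove the two inequalities separately, each directly from the definition $\Vert A \Vert_2 = \sup_{\Vert x \Vert_2 = 1} \Vert Ax \Vert_2$; neither direction requires more than elementary manipulations.

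For the lower bound I would test the supremum against the canonical basis vectors $\mathbf{e}_i \in \mathbb{R}^n$. Since $A\mathbf{e}_i = \mathbf{a}_i$ and $\Vert \mathbf{e}_i \Vert_2 = 1$, the definition of the operator norm gives $\Vert A \Vert_2 \geq \Vert A\mathbf{e}_i \Vert_2 = \Vert \mathbf{a}_i \Vert_2$ for every $i$, and taking the maximum over $i$ yields the left-hand inequality $\max_i \Vert \mathbf{a}_i \Vert_2 \leq \Vert A \Vert_2$.

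For the upper bound I would expand an arbitrary unit vector $x = (x_1,\dots,x_n)^T$ in coordinates, writing $Ax = \sum_{i=1}^n x_i \mathbf{a}_i$. The triangle inequality then gives $\Vert Ax \Vert_2 \leq \sum_{i=1}^n |x_i|\, \Vert \mathbf{a}_i \Vert_2 \leq \big(\max_i \Vert \mathbf{a}_i \Vert_2\big) \sum_{i=1}^n |x_i| = \big(\max_i \Vert \mathbf{a}_i \Vert_2\big)\, \Vert x \Vert_1$, and the standard norm comparison $\Vert x \Vert_1 \leq \sqrt{n}\, \Vert x \Vert_2 = \sqrt{n}$ (itself Cauchy--Schwarz applied to $x$ and the all-ones vector) finishes the estimate after taking the supremum over unit $x$. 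Alternatively, one could route through the Frobenius norm, using $\Vert A \Vert_2 \leq \Vert A \Vert_F = \big(\sum_{i=1}^n \Vert \mathbf{a}_i \Vert_2^2\big)^{1/2} \leq \sqrt{n}\, \max_i \Vert \mathbf{a}_i \Vert_2$.

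There is no genuine obstacle here: the statement is a textbook fact, and the only ingredients beyond the definition of $\Vert \cdot \Vert_2$ are the triangle inequality and the elementary comparison between the $1$-norm and the $2$-norm of a vector in $\mathbb{R}^n$ (or, on the Frobenius route, the inequality $\Vert A \Vert_2 \leq \Vert A \Vert_F$). I would present the coordinate-expansion argument since it is self-contained and does not require invoking the Frobenius norm.
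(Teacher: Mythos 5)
Your proposal is correct and follows essentially the same route as the paper: the lower bound by testing against the canonical basis vectors, and the upper bound by expanding $Ax=\sum_i x_i\mathbf{a}_i$, applying the triangle inequality, and then Cauchy--Schwarz (the paper applies Cauchy--Schwarz to $(|x_i|)_i$ and $(\Vert\mathbf{a}_i\Vert_2)_i$ before bounding the resulting sum by $\sqrt{n}\max_i\Vert\mathbf{a}_i\Vert_2$, whereas you factor out the maximum first and use $\Vert x\Vert_1\le\sqrt{n}\Vert x\Vert_2$, which is the same estimate in a slightly different order). No gaps.
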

\begin{proof}
Let $\mathbf{e}_i$ be the $i$-th element of the canonical basis of $\mathbb{R}^n$. Then $A\mathbf{e}_i = \mathbf{a}_i$, and the left-hand inequality is proved. For the right-hand inequality, consider $\mathbf{x} \in \mathbb{R}^n$, then
\begin{equation*}
    A\mathbf{x} = \sum x_i \mathbf{a}_i \Rightarrow  \Vert A\mathbf{x} \Vert_2 \leq \sum |x_i| \Vert \mathbf{a}_i \Vert_2.
\end{equation*}
Apply Cauchy-Schwarz inequality and take $\Vert \mathbf{x} \Vert = 1$ to conclude
\begin{equation*}
   \Vert A\mathbf{x} \Vert_2 \leq \Vert \mathbf{x} \Vert_2 \sqrt{ \sum_i \Vert \mathbf{a}_i \Vert^2_2} \leq \sqrt{n} \max_i \Vert \mathbf{a}_i \Vert_2.
\end{equation*}
\qed\end{proof}
The followings are easy consequences of the result above.  
\begin{cor}
For any matrix $A$ we have
\begin{equation}
\Vert A \Vert_{\max} \leq \Vert A \Vert_{2} \leq \sqrt{mn}\Vert A \Vert_{\max}.  \label{eq:bound_max_norm}
\end{equation}
\end{cor}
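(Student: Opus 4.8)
The plan is to sandwich $\Vert A \Vert_2$ between the two desired quantities by chaining the column-norm bound \eqref{eq:bound_largest_sv} with the elementary comparison between the $2$-norm of a single column and its largest entry. Write $A = (\mathbf{a}_1\ \dots\ \mathbf{a}_n)$ as before, and recall $\Vert A \Vert_{\max} = \max_{i,j} |a_{ij}|$.

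For the left inequality, I would observe that any entry $a_{ij}$ is one coordinate of the column $\mathbf{a}_j$, hence $|a_{ij}| \leq \Vert \mathbf{a}_j \Vert_2$; taking the maximum over $i,j$ gives $\Vert A \Vert_{\max} \leq \max_j \Vert \mathbf{a}_j \Vert_2$, and the left half of \eqref{eq:bound_largest_sv} then yields $\max_j \Vert \mathbf{a}_j \Vert_2 \leq \Vert A \Vert_2$. Chaining these two steps gives $\Vert A \Vert_{\max} \leq \Vert A \Vert_2$.

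For the right inequality, I would go the other direction: each column satisfies $\Vert \mathbf{a}_j \Vert_2^2 = \sum_{i=1}^m a_{ij}^2 \leq m \max_i a_{ij}^2 \leq m \Vert A \Vert_{\max}^2$, so $\max_j \Vert \mathbf{a}_j \Vert_2 \leq \sqrt{m}\,\Vert A \Vert_{\max}$. Feeding this into the right half of \eqref{eq:bound_largest_sv}, namely $\Vert A \Vert_2 \leq \sqrt{n} \max_j \Vert \mathbf{a}_j \Vert_2$, produces $\Vert A \Vert_2 \leq \sqrt{n}\cdot \sqrt{m}\,\Vert A \Vert_{\max} = \sqrt{mn}\,\Vert A \Vert_{\max}$, which completes the proof.

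There is no genuine obstacle here: the statement follows purely by combining \eqref{eq:bound_largest_sv} with the trivial bounds $|a_{ij}| \leq \Vert \mathbf{a}_j \Vert_2 \leq \sqrt{m}\,\max_i |a_{ij}|$. The only thing to be mildly careful about is not to mistakenly treat $\Vert \cdot \Vert_{\max}$ as submultiplicative — but since the argument never multiplies two matrices, this never comes up, and the whole proof is a two-line chain of inequalities in each direction.
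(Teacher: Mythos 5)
Your proof is correct and follows exactly the paper's route: it sandwiches each column norm via $|a_{ij}| \leq \Vert \mathbf{a}_j \Vert_2 \leq \sqrt{m}\,\max_i |a_{ij}|$ and then applies both halves of \eqref{eq:bound_largest_sv}, which is precisely what the paper's proof does (and your version is in fact written out more carefully, with cleaner indexing). No differences worth noting.
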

\begin{proof}
Let $\mathbf{a}_i$ be the $i$-th column of $A$. Then we have
\begin{align*}
    \Vert  \mathbf{a}_i \Vert & = \sqrt{a_{i1}^2 + \dots + a_{im}^2} \leq \sqrt{m} \max_j \sqrt{a_{ij}^2} = \sqrt{m} \max_j  \vert a_{ij} \vert, \\
    \end{align*}
and, for any $1 \leq j \leq m$, we have
\begin{align*}
    \Vert  \mathbf{a}_i \Vert &\geq \sqrt{a_{ij}^2} = \vert a_{ij} \vert.
\end{align*}
Apply these inequalities to \eqref{eq:bound_largest_sv} to conclude.
\qed\end{proof}
\begin{cor}
If $A$ is a nonsingular and its inverse is partitioned into rows as
\begin{equation*}
    A^{-1} = \left( \begin{array}{c}
          \mathbf{b}_1^T \\
          \vdots \\
          \mathbf{b}_n^T
    \end{array} \right),
\end{equation*}
then 
\begin{equation}
    \sigma_{\min}(A) \leq \min_{i} (\Vert \mathbf{b}_i \Vert^{-1}_2) \leq \sqrt{n} \sigma_{\min}(A). \label{eq:bound_smallest_sv}
\end{equation}
\end{cor}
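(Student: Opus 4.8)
The plan is to reduce the claim to the already-established two-norm column bound \eqref{eq:bound_largest_sv} applied to the transpose of $A^{-1}$. The key identity I would invoke first is that for a nonsingular matrix, $\sigma_{\min}(A) = 1/\sigma_{\max}(A^{-1}) = 1/\Vert A^{-1}\Vert_2$; this follows because the singular values of $A^{-1}$ are the reciprocals of those of $A$. I would also recall that singular values are invariant under transposition (the matrices $B^TB$ and $(B^T)^T B^T = BB^T$ share their nonzero eigenvalues), so $\Vert A^{-1}\Vert_2 = \Vert A^{-T}\Vert_2$.

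Next I would observe that the columns of $A^{-T}$ are exactly the vectors $\mathbf{b}_1,\dots,\mathbf{b}_n$ appearing as the rows of $A^{-1}$. Applying \eqref{eq:bound_largest_sv} to the $n$-column matrix $A^{-T} = (\mathbf{b}_1\ \dots\ \mathbf{b}_n)$ gives
\begin{equation*}
    \max_i \Vert \mathbf{b}_i \Vert_2 \leq \Vert A^{-T} \Vert_2 = \Vert A^{-1} \Vert_2 \leq \sqrt{n}\, \max_i \Vert \mathbf{b}_i \Vert_2.
\end{equation*}

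Finally I would take reciprocals throughout and substitute $\Vert A^{-1}\Vert_2 = 1/\sigma_{\min}(A)$, together with the elementary fact that $\bigl(\max_i \Vert \mathbf{b}_i\Vert_2\bigr)^{-1} = \min_i \Vert \mathbf{b}_i\Vert_2^{-1}$ (all $\mathbf{b}_i$ are nonzero since $A^{-1}$ is nonsingular, so the reciprocals are well defined). The left inequality $\max_i\Vert\mathbf{b}_i\Vert_2 \leq \Vert A^{-1}\Vert_2$ yields $\sigma_{\min}(A) \leq \min_i\Vert\mathbf{b}_i\Vert_2^{-1}$, and the right inequality $\Vert A^{-1}\Vert_2 \leq \sqrt{n}\max_i\Vert\mathbf{b}_i\Vert_2$ yields $\min_i\Vert\mathbf{b}_i\Vert_2^{-1} \leq \sqrt{n}\,\sigma_{\min}(A)$, which is exactly \eqref{eq:bound_smallest_sv}. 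There is no real obstacle here: the only thing to be careful about is the direction of the inequalities when inverting, and making sure the reciprocal-of-max equals min-of-reciprocals step is justified by nonsingularity of $A$.
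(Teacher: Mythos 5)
Your proposal is correct and follows essentially the same route as the paper: apply the column-norm bound \eqref{eq:bound_largest_sv} to $A^{-T}$, use $\Vert A^{-T}\Vert_2 = \Vert A^{-1}\Vert_2 = 1/\sigma_{\min}(A)$, and take reciprocals. The only cosmetic difference is that the paper re-derives the $\sqrt{n}$ side directly via Cauchy--Schwarz rather than simply inverting the right-hand inequality of the lemma as you do; your version is slightly cleaner and equally valid.
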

\begin{proof} The Lemma above applied to $A^{-T}$ yields
\begin{equation}
     \max_{i} \Vert \mathbf{b}_i \Vert_2 \leq \Vert A^{-T} \Vert_2 \leq \sqrt{n} \max_{i} \Vert \mathbf{b}_i \Vert_2,
\end{equation}
from which we deduce the left-hand inequality
\begin{equation}
     \min_{i} (\Vert \mathbf{b}_i \Vert^{-1}_2) \geq \frac{1}{\Vert A^{-T} \Vert_2} = \frac{1}{\Vert A^{-1} \Vert_2} = \sigma_{\min}(A).
\end{equation}
For the right-hand inequality, consider $\mathbf{x} \in \mathbb{R}^n$, then
\begin{equation*}
    \Vert A^{-1} \mathbf{x} \Vert^2_2 = \left\Vert \begin{array}{c}
         \mathbf{b}_1^T\mathbf{x}  \\
         \vdots \\
         \mathbf{b}_n^T\mathbf{x} 
    \end{array} \right\Vert^2_2 = \sum_i \left(\mathbf{b}_i^T\mathbf{x}\right)^2 \leq \sum_i \left\Vert \mathbf{b}_i \right\Vert^2_2 \left\Vert \mathbf{x}\right\Vert^2_2,
\end{equation*}
where we used Cauchy-Schwarz inequality. We have
\begin{equation*}
    \Vert A^{-1}\Vert^2_2 = \max_{\Vert \mathbf{x} \Vert_2 = 1} \Vert A^{-1}  \mathbf{x} \Vert^2_2 \leq  \max_{\Vert \mathbf{x} \Vert_2 = 1} \sum_i \left\Vert \mathbf{b}_i \right\Vert^2_2 \left\Vert \mathbf{x}\right\Vert^2_2 =
    \sum_i \left\Vert \mathbf{b}_i \right\Vert^2_2 \leq n  \max_i \left\Vert \mathbf{b}_i \right\Vert^2_2,
\end{equation*}
from which we deduce
\begin{equation*}
    \min_{i} (\Vert \mathbf{b}_i \Vert^{-1}_2) \geq \frac{\sqrt{n}}{\Vert A^{-1}\Vert_2} = \sqrt{n} \sigma_{\min}(A).
\end{equation*}
\qed\end{proof}

\subsection{About Strictly Diagonally Dominant matrices}
A matrix $A$ is said to be Strictly Diagonally Dominant (SDD) by rows if
\begin{equation*}
    | a_{ii}| > \sum_{j\neq i} | a_{ij}|, 
\end{equation*}
for all $i$. We say $A$ is SDD by columns if $A^T$ is SSD by rows.
The following result is taken from \cite{Rad16}.
\begin{lem} \label{lem:inverse_SDD}
Let $\Theta = \mathbb{I} - S$, with $\Vert S \Vert_{\infty} < \frac{1}{2}$. Then $\Theta^{-1}$ exists, it has a positive diagonal and it is strictly diagonally dominant.

\end{lem}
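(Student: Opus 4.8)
The plan is to produce $\Theta^{-1}$ explicitly through a Neumann series and then read off both claimed properties from the single identity this inverse satisfies. Write $\mu = \Vert S\Vert_\infty < \tfrac12$. Since $\mu<1$, the series $\sum_{k\geq 0}S^k$ converges in the $\infty$-operator norm, so $\Theta^{-1}$ exists; setting $B := \Theta^{-1}$ we get the bound $\Vert B\Vert_\infty \leq (1-\mu)^{-1}$ from the triangle inequality on the series. The key observation is the fixed-point identity $B = \mathbb{I} + SB$, obtained by rearranging $(\mathbb{I}-S)B = \mathbb{I}$; it isolates the contribution of the identity to the diagonal of $B$ and expresses every off-diagonal entry of $B$ purely through $SB$.

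First I would bound the row sums of $SB$. Submultiplicativity of $\Vert\cdot\Vert_\infty$ gives $\Vert SB\Vert_\infty \leq \mu(1-\mu)^{-1}$, and the hypothesis $\mu<\tfrac12$ is exactly what makes $\mu(1-\mu)^{-1}<1$. Hence for every row index $i$ one has $\sum_{j}|(SB)_{ij}| \leq \Vert SB\Vert_\infty = \mu(1-\mu)^{-1} < 1$.

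Next, from $B = \mathbb{I}+SB$ we read $b_{ij} = \delta_{ij} + (SB)_{ij}$. For the diagonal, $b_{ii} = 1 + (SB)_{ii} \geq 1 - |(SB)_{ii}| \geq 1 - \mu(1-\mu)^{-1} > 0$, which proves positivity. For strict diagonal dominance by rows, the off-diagonal entries are $b_{ij}=(SB)_{ij}$ for $j\neq i$, so
\begin{equation*}
  |b_{ii}| - \sum_{j\neq i}|b_{ij}| \;\geq\; \bigl(1 - |(SB)_{ii}|\bigr) - \sum_{j\neq i}|(SB)_{ij}| \;=\; 1 - \sum_{j}|(SB)_{ij}| \;\geq\; 1 - \frac{\mu}{1-\mu} \;>\; 0 .
\end{equation*}
Diagonal dominance comes out by rows because the hypothesis is phrased with the row-sum norm $\Vert\cdot\Vert_\infty$; the symmetric argument starting from $B=\mathbb{I}+BS$ and $\Vert S^T\Vert_\infty$ would give it by columns.

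I do not expect a genuine obstacle here: the whole proof hinges on the identity $B=\mathbb{I}+SB$ together with the Neumann estimate. The only point that needs care is the constant bookkeeping, namely checking that the threshold $\tfrac12$ on $\Vert S\Vert_\infty$ is precisely the condition forcing $\Vert SB\Vert_\infty<1$, after which both conclusions follow immediately.
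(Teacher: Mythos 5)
Your proof is correct and follows essentially the same route as the paper: both rest on the Neumann series for $(\mathbb{I}-S)^{-1}$ and on the observation that $\Vert S\Vert_\infty<\tfrac12$ forces the non-identity part of the inverse (your $SB$, the paper's $\sum_{k\geq 1}S^k$ — they are the same matrix) to have all absolute row sums below $1$, from which positivity of the diagonal and strict row diagonal dominance follow immediately. The only difference is presentational: you package the series tail via the fixed-point identity $B=\mathbb{I}+SB$ and a single norm bound, whereas the paper manipulates the tail entrywise.
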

\begin{proof}
In this case Neumann series converges, and we have
\begin{equation}
    \bar{\Theta} = {\Theta}^{-1} = \sum_{k=0}^{\infty} (\mathbb{I} - \Theta)^k = \sum_{k=0}^{\infty} S^k  = \mathbb{I} + \sum_{k=1}^{\infty} S^k,
    \label{eq:neumann_series}
\end{equation}
hence
\begin{equation*}
    \max_i \sum_j |\bar{\theta}|_{ij} = \left\Vert \bar{\Theta} \right\Vert_\infty = \left\Vert \mathbb{I} + \sum_{k=1}^{\infty} S^k \right\Vert_{\infty} \leq 1+  \left\Vert \sum_{k=1}^{\infty} S^k \right\Vert_{\infty} < 2,
\end{equation*}
since $ \left\Vert \sum_{k=1}^{\infty} S^k \right\Vert_{\infty} \leq \sum_{k=1}^{\infty} \left\Vert S^k \right\Vert_{\infty} \leq \sum_{k=1}^{\infty} \left\Vert S \right\Vert^k_{\infty} < \sum_{k=1}^{\infty} \frac{1}{2}^k = 1$. Moreover, we also have that 
\begin{equation*}
    1 > \left\Vert \sum_{k=1}^{\infty} S^k \right\Vert_{\infty} = \max_i \sum_j \left| \sum_{k=1}^{\infty} S^k_{ij}\right| \geq \sum_j \left| \sum_{k=1}^{\infty} S^k_{ij}\right| \geq \left| \sum_{k=1}^{\infty} S^k_{ij}\right|,
\end{equation*}
for all choices of $i,j$.
Considering eq. \eqref{eq:neumann_series} entrywise, we get
\begin{equation*}
    \bar{\theta}_{ij} = \left( \sum_{k=0}^{\infty} S^k  \right)_{ij}= \mathbb{I}_{ij} + \sum_{k=1}^{\infty} S^k_{ij},
\end{equation*}
implying that $\bar{\theta}_{ii} > 0$. Moreover, we have
\begin{equation*}
    \begin{aligned}
        &\bar{\theta}_{ii} - \sum_{j \neq i} \left| \bar{\theta}_{ij} \right| = 1+ \sum_{k=1}^{\infty} S^k_{ii} - \sum_{j \neq i} \left| \sum_{k=1}^{\infty} S^k_{ij} \right|\\
        & > \sum_{j} \left| \sum_{k=1}^{\infty} S^k_{ij} \right| + \sum_{k=1}^{\infty} S^k_{ii} - \sum_{j \neq i} \left| \sum_{k=1}^{\infty} S^k_{ij} \right| \\
        & = \sum_{j \neq i} \left( \left| \sum_{k=1}^{\infty} S^k_{ij} \right| - \left| \sum_{k=1}^{\infty} S^k_{ij} \right| \right) +
        \left| \sum_{k=1}^{\infty} S^k_{ii} \right| + \sum_{k=1}^{\infty} S^k_{ii} \\
        & = \left| \sum_{k=1}^{\infty} S^k_{ii} \right| + \sum_{k=1}^{\infty} S^k_{ii} \geq 0,
    \end{aligned}
\end{equation*}
therefore $\bar{\Theta}$ is a strictly diagonally dominant matrix with a positive diagonal. 
\qed\end{proof}
\begin{cor} \label{cor:inverse_SDD}
Let $A = \alpha(\mathbb{I} - S)$, with $\alpha>0$ and $\Vert S \Vert_{\infty} < \frac{1}{2}$. Then $A^{-1}$ exists, it has a positive diagonal and it is strictly diagonally dominant.
\end{cor}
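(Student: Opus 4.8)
The plan is to reduce the statement directly to Lemma \ref{lem:inverse_SDD} by factoring out the positive scalar $\alpha$. First I would write $A = \alpha\, \Theta$ with $\Theta = \mathbb{I} - S$, and observe that the hypothesis $\Vert S \Vert_{\infty} < \frac{1}{2}$ is exactly what Lemma \ref{lem:inverse_SDD} requires. Hence $\Theta^{-1}$ exists, it has a strictly positive diagonal, and it is strictly diagonally dominant by rows.

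Next I would transfer this information from $\Theta$ to $A$. Since $\alpha > 0$ is a nonzero scalar, $A$ is invertible with $A^{-1} = \alpha^{-1}\Theta^{-1}$, so that $(A^{-1})_{ij} = \alpha^{-1}(\Theta^{-1})_{ij}$ for all $i,j$. Because $\alpha^{-1} > 0$, the diagonal entries $(A^{-1})_{ii} = \alpha^{-1}(\Theta^{-1})_{ii}$ remain strictly positive.

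Finally, for strict diagonal dominance I would use that the defining inequality is positively homogeneous: multiplying both sides of $(\Theta^{-1})_{ii} > \sum_{j\neq i} \vert (\Theta^{-1})_{ij} \vert$ by $\alpha^{-1} > 0$ yields $(A^{-1})_{ii} > \sum_{j\neq i} \vert (A^{-1})_{ij} \vert$ for every $i$, which is precisely the claim.

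I do not anticipate any real obstacle here: the corollary is essentially the observation that multiplication by a positive scalar commutes with inversion and preserves both the sign pattern of the diagonal and the (positively homogeneous) strict diagonal dominance inequalities. The only point worth stating explicitly is that $\alpha \neq 0$ is what gives invertibility of $A$ from that of $\Theta$, whereas $\alpha > 0$ — rather than merely $\alpha \neq 0$ — is what keeps the diagonal positive.
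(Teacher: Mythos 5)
Your proposal is correct and follows exactly the paper's argument: apply Lemma \ref{lem:inverse_SDD} to $\mathbb{I}-S$ and then observe that multiplying by the positive scalar $\alpha^{-1}$ preserves both the positive diagonal and strict diagonal dominance. You simply spell out the homogeneity of the dominance inequality, which the paper leaves implicit.
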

\begin{proof}
Apply Lemma \ref{lem:inverse_SDD} to $\mathbb{I} - S$, then $(\mathbb{I} - S)^{-1}$ is SDD with a positive diagonal, and so is $\alpha^{-1}(\mathbb{I} - S)^{-1} = A^{-1}$.
\qed\end{proof}
Let us state some results, for the proof see \cite{VARAH19753}. Let $A$ be SDD by rows,
and set $\alpha = \min_i | a_{ii}| - \sum_{j\neq i} | a_{ij}| > 0$. Then
\begin{equation}
    \Vert  A^{-1} \Vert < \frac{1}{\alpha} \quad \Rightarrow \quad \Vert  A^{-1} \Vert^{-1} = \sigma_{\min}( A) > \alpha.
    \label{eq:bound_sigma_min_SDD}
\end{equation}
If $A$ is SDD both by rows and columns, and $\beta = \min_j |a_{jj}| - \sum_{i\neq j} | a_{ij}| > 0$, then
\begin{equation*}
    \Vert  A^{-1} \Vert^{-1} = \sigma_{\min}( A) \geq \sqrt{\alpha \beta}.
\end{equation*}
If $A$ is block diagonally dominant, i.e.
\begin{equation*}
    A = \left\{ A_{ij}\right\}, 1 \leq i,j\leq n \qquad \Vert A_{ii}\Vert^{-1}_{\infty} > \sum_{j\neq i} \Vert A_{ij}\Vert_{\infty},  
\end{equation*}
and $\alpha = \min_i  \Vert A_{ii}\Vert^{-1}_{\infty} - \sum_{j\neq i} \Vert A_{ij}\Vert_{\infty}$, then
\begin{equation*}
    \Vert  A^{-1} \Vert_{\infty} < \frac{1}{\alpha}.
\end{equation*}

\begin{lem} 
\label{lem:SDD_rescaled_matrix} 
Let $A$ be an $n \times n$ strictly diagonally dominant matrix, with  $\gamma = \min_i 1 - \sum_{j\neq i} | a_{ij}/ a_{ii}| > 0$.
Let $D = \diag(d_1,\dots,d_n)$, and let $1 \geq \tau >0 $ such that $|d_i| \geq \tau \bar{d} >0$, where $ \bar{d} = \max_i |d_i|>0$, for all $i$. The matrix $DAD$ is SDD if $\gamma > 1-\tau^2$.
\end{lem}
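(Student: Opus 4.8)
The plan is to check strict diagonal dominance of $DAD$ one row at a time, so the whole statement reduces to the single‑row inequality $|(DAD)_{ii}| > \sum_{j\neq i}|(DAD)_{ij}|$ for every $i$. First I would write out the entries: since $D$ is diagonal, $(DAD)_{ij} = d_i\, a_{ij}\, d_j$, hence $|(DAD)_{ii}| = |d_i|^2|a_{ii}|$ and $\sum_{j\neq i}|(DAD)_{ij}| = |d_i|\sum_{j\neq i}|d_j|\,|a_{ij}|$. Note that $A$ being SDD forces $|a_{ii}|>0$ for all $i$, so $\gamma$ is well defined, and unwinding its definition gives $\sum_{j\neq i}|a_{ij}| \le (1-\gamma)\,|a_{ii}|$ for every $i$.

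Next I would insert the two‑sided control on the scaling factors. For the off‑diagonal sum I use $|d_j|\le\bar d$ (and $|d_i|\le\bar d$) together with the bound just recorded, obtaining
\[
\sum_{j\neq i}|(DAD)_{ij}| = |d_i|\sum_{j\neq i}|d_j|\,|a_{ij}| \le \bar d^2 (1-\gamma)\,|a_{ii}|,
\]
while for the diagonal term I use the lower bound $|d_i|\ge \tau\bar d$, giving $|(DAD)_{ii}| = |d_i|^2|a_{ii}| \ge \tau^2\bar d^2|a_{ii}|$. Since $\bar d>0$ and $|a_{ii}|>0$, the hypothesis $\gamma>1-\tau^2$, i.e.\ $\tau^2>1-\gamma$, then yields
\[
|(DAD)_{ii}| \ge \tau^2\bar d^2|a_{ii}| > (1-\gamma)\,\bar d^2|a_{ii}| \ge \sum_{j\neq i}|(DAD)_{ij}|,
\]
which is exactly strict diagonal dominance of $DAD$ in row $i$; as $i$ is arbitrary, $DAD$ is SDD.

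There is no real obstacle here: it is a short chain of triangle inequalities and monotone substitutions. The only care needed is bookkeeping — applying the lower bound $|d_i|\ge\tau\bar d$ only to the diagonal factor while using $|d_j|\le\bar d$ (and $|d_i|\le\bar d$) everywhere else — and recording that the SDD hypothesis on $A$ guarantees $|a_{ii}|\neq 0$, so clearing $|a_{ii}|$ from the inequality is legitimate. One may note in passing that dividing the target inequality through by $|d_i|$ first would already make the weaker condition $\gamma>1-\tau$ suffice; keeping the factor $|d_i|^2$ intact is what produces precisely the threshold $1-\tau^2$ exploited later, e.g.\ in Lemma~\ref{lem:tau_gamma}. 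If column dominance of $DAD$ were also desired, the identical computation applied to $A^T$ (which has the same diagonal, hence the same $\gamma$) delivers it, but row dominance is all that is used in the sequel.
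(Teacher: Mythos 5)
Your proof is correct and follows essentially the same route as the paper's: bound the off-diagonal row sum of $DAD$ by $\bar d^{\,2}\sum_{j\neq i}|a_{ij}|\le\bar d^{\,2}(1-\gamma)|a_{ii}|$ and the diagonal entry from below by $\tau^2\bar d^{\,2}|a_{ii}|$, then invoke $\tau^2>1-\gamma$. Your side remark that dividing by $|d_i|$ would weaken the hypothesis to $\gamma>1-\tau$ is also accurate, and you correctly note why the $\tau^2$ form is the one the paper needs downstream.
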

\begin{proof}
We have $(DAD)_{ij} = d_id_j a_{ij}$.
For all $1\leq i\leq k$, we have 
\begin{equation*}
    \sum_{j\neq i} |d_id_j a_{ij}| =  |d_i| \sum_{j\neq i} |d_j a_{ij}| \leq |d_i| \bar{d} \sum_{j\neq i} |a_{ij}| \leq  \bar{d}^2 \sum_{j\neq i} |a_{ij}|,
\end{equation*}
and hence
\begin{equation*}
    \begin{aligned}
    & d_i^2 |a_{ii}| - \sum_{j\neq i} | d_id_j a_{ij}| \geq \tau^2\bar{d}^2|a_{ii}| - \bar{d}^2 \sum_{j\neq i} |a_{ij}| \\
    & =\bar{d}^{2} \left( \tau^2 |a_{ii}| + |a_{ii}|(1-\tau^2) - |a_{ii}| (1-\tau^2) - \sum_{j\neq i} |a_{ij}| \right) \\
    & = \bar{d}^2 |a_{ii}|(\gamma - (1-\tau^2)) >0,\\
    \end{aligned}
\end{equation*}
where the last inequality holds iff 
\begin{equation*}
    \gamma > 1-\tau^2.
\end{equation*}
\qed\end{proof}

\end{document}